\documentclass{article}
\usepackage{latexsym}
\usepackage{amssymb}
\usepackage{amsthm} 
\usepackage{amsmath}
\usepackage[]{color}
\usepackage{enumitem}
\usepackage{hyperref}

\newtheorem{theorem}{Theorem}

\newtheorem{corollary}{Corollary}

\newtheorem{lemma}{Lemma}
\newtheorem{question}{Question}

\newtheorem{definition}{Definition}
\newtheorem{example}{Example}

\def\1{\medskip \noindent}
 
 \def\Z{{\mathbb Z}}

 \def\N{{\mathbb N}}

\newcommand\inv{{^{-1}}}
\def\st{^{st}}
\def\th{^{th}}
\def\nd{^{nd}}
\def\rd{^{rd}}
\def\Det{{\rm Det}}
\def\Dist{{\rm Dist}}
\def\aut{{\rm Aut}}

\begin{document}

\title{\bf The Cost of 2-Distinguishing Hypercubes}
\author{\Large Debra L. Boutin \\ Hamilton College \\ Clinton, NY  13323\\ {\tt dboutin@hamilton.edu}}

\maketitle

\begin{abstract}    A graph $G$ is said to be {\it $2$-distinguishable} if there is a labeling of the vertices with two labels so that only the trivial automorphism preserves the labels.  The minimum size of a label class, over all 2-distinguishing labelings, is called the {\it cost of $2$-distinguishing}, denoted by $\rho(G)$. For $n\geq 4$ the hypercubes $Q_n$ are 2-distinguishable, but the values for $\rho(Q_n)$ have been elusive, with only bounds and partial results previously known. This paper settles the question. The main result can be summarized as: for $n\geq 4$, $\rho(Q_n) \in \{1+\lceil \log_2 n \rceil, 2 + \lceil \log_2 n\rceil\}$.  Exact values are be found using a recursive relationship involving a new parameter $\nu_m$, the smallest integer for which $\rho(Q_{\nu_m})=m$.  The main result is 

\begin{gather*} 4\leq n \leq 12\Longrightarrow \rho(Q_n)=5, \text{ and } 5\leq m \leq 11 \Longrightarrow \nu_m=4; \\
\text{ for } m\geq 6, \rho(Q_n) = m \iff 2^{m-2} - \nu_{m-1} + 1 \leq n \leq 2^{m-1}-\nu_m; \\
\text{ for } n\geq 5, \nu_m = n \iff 2^{n-1} - \rho(Q_{n-1}) + 1\leq m \leq 2^{n}-\rho(Q_n).\end{gather*}

\end{abstract}



\maketitle
\section{Introduction}

\1A labeling of the vertices of a graph $G$ with the integers $1$ through $d$ is called a {\it $d$-distinguishing labeling} if no nontrivial automorphism of $G$ preserves the labels. A graph is called {\it $d$-distinguishable} if it has a $d$-distinguishing labeling. The smallest integer for which $G$ has a $d$-distinguishing labeling is called the {\it distinguishing number}, $\Dist(G)$ \cite{AC}.   Recent work shows that in many graph families, all but a few members are $2$-distinguishable.   Examples of 2-distinguishable finite graphs include hypercubes $Q_n$ with $n\geq 4$ \cite{BC}, Cartesian powers  $G^n$ for a connected graph $G\ne K_2,K_3$ and $n\geq 2$ \cite{A, IK2,KZ}, Kneser graphs $K_{n:k}$ with  $n\geq 6, k\geq 2$ \cite{AlBo2007},  $3$-connected planar graphs (with seven small exceptions)  \cite{FNT}, and for a 2-distinguishable graph $G\ne K_1, K_2$ , $\mu^{(t)}(G)$ the generalized Mycielski construction applied to $G$ \cite{BCKLPR2020a}.  Examples of 2-distinguishable infinite graphs include the  denumerable random graph \cite{IKT}, the infinite hypercube of dimension $n$ \cite{IKT}, locally finite trees with no vertex of degree 1 \cite{WZ}, and denumerable vertex-transitive graphs of connectivity 1 \cite{STW}. 

\1A label class in a distinguishing labeling of a graph is called a {\it distinguishing class}. If $G$ is 2-distinguishable, the minimum size of a distinguishing class, over all 2-distinguishing labelings of $G$, is called the {\it cost of $2$-distinguishing} $G$, denoted $\rho(G)$ \cite{Bou2008}.  This parameter is also sometimes referred to as the {\it distinguishing cost} of $G$.  Another useful graph parameter that deals with graph symmetry is the size of a smallest set of vertices whose pointwise stabilizer is trivial. This is called the {\it determining number}, $\Det(G)$. There are multiple interesting connections between $\Det(G), \Dist(G)$, and $\rho(G)$, but for the sake of efficiency, we will not explore them fully here. However, it is straightforward to show that a distinguishing class for  $G$ has trivial  pointwise stabilizer.  This immediately tells us that $\Det(G) \leq \rho(G)$. For some families of graphs, $\rho(G)$ can even be found in terms of $\Det(G)$ \cite{Bou2013a}.  We will encounter this relationship in this paper.  Note that though $\Det(G)$ is a lower bound for $\Dist(G)$, it is not always a good lower bound.  As seen  in \cite{BoIm2017},  the cost of 2-distinguishing can be an arbitrarily large multiple of the determining number.

\1The originating work for the cost of 2-distinguishing \cite{Bou2008} was in answer to a question posed by Wilfried Imrich in 2007, \lq\lq  What is the minimum number of vertices in a label class of a $2$-distinguishing labeling for the hypercube?" In the 2005 paper proving that for $n\geq 4$, $Q_n$ is 2-distinguishable,  Bogstad and Cowen \cite{BC} use smallest distinguishing classes of size $n+2$. The best result known prior to \cite{Bou2008}  was $\rho(Q_n) \approx \sqrt n$ \cite{IW}. In \cite{Bou2008} this author showed that for $n\geq 5$, $\lceil \log_2 n\rceil +1 \leq \rho(Q_n)\leq 2\lceil \log_2 n\rceil - 1$.  Though the upper bound is less than twice the lower bound, and though even the upper bound was much smaller than previously known results, having only bounds on the distinguishing cost was not satisfying.  A small amount of progress was made in 2013 when a few exact values were found: for $m\geq 5$ and $n\in \{2^{m-1}-2, 2^{m-1}-1, 2^{m-1}\}$, $\rho(Q_n)=m+1$ \cite{Bou2013b}. 

\1It is fruitful to consider $Q_n$ as the $n\th$ Cartesian power of $K_2$, denoted $K_2^n$.  In 2013, this author shows that for prime graphs $H$ meeting mild hypotheses, we get $\rho(H^k) \in \{\Det(H^k), \Det(H^k)+1\}$ \cite{Bou2013a}, which is useful because results on $\Det(H^k)$ are given in \cite{Bou2009}. However, one of the mild hypotheses that is critical for finding $\rho(H^k)$ is that $H$ have at least 3 vertices. So the technique used for more general Cartesian powers is not applicable to $Q_n$. In this paper, the technique  from \cite{Bou2013a} is greatly refined into a tool for proving exact values for the cost of distinguishing hypercubes.  More specifically, if $n$ is greater than but not \lq\lq too close" to $2^{m-2}$, and less than but not \lq\lq too close" to $2^{m-1}$, then $\rho(Q_n)=m$. For such $n$, we can then solve to get $\rho(Q_n) = 1+\lceil \log_2 n\rceil$.  Further, if $n$ is \lq\lq too close" to $2^{m-1}$, we get $\rho(Q_n) = m+1$, or equivalently $\rho(Q_n) = 2+\lceil \log_2 n\rceil$.  Since by \cite{Bou2009}, $\Det(Q_n) = 1 + \lceil \log_2 n \rceil$, this paper concludes that for $n\geq 4$, $\rho(Q_n) \in \{\Det(Q_n), \Det(Q_n)+1\}$. 

\1To achieve exact values we want for the distinguishing cost for hypercubes, the big question becomes, \lq\lq How close is too close?"  We define \lq\lq too close" to $2^{m-2}$ as $\nu_{m-1}$ and \lq\lq too close" to $2^{m-1}$ as $\nu_m-1$.  That is, for $m\geq 5$, we produce a sequence of integers $\{\nu_m\}$  with the property that if $2^{m-2}-\nu_{m-1}+1 \leq n \leq 2^{m-1} - \nu_m$, then $\rho(Q_n)=m$.  This provides a natural recursive relationship between $\{\rho(Q_n)\}$ and $\{\nu_m\}$, which will allow us to find the value of each for any $n\geq 4$ or $m\geq 5$.

\1The paper is organized as follows.  In Section~\ref{sec:charmat}, for any subset of vertices in $Q_n$ we define the {\it characteristic matrix} of the set, define what it means for a matrix to be {\it asymmetric}, and show that set is a distinguishing class if and only if its characteristic matrix is asymmetric.  This section also gives and proves  essential properties of symmetries of characteristic matrices.  We prove the existence of asymmetric matrices of size $m{\times} m, m{\times} (m-1)$, and $m{\times}\lfloor \frac{m}{2} \rfloor$ in Section~\ref{sec:basicasymm}. In Section~\ref{sec:comp}, we prove a 2016 theorem from Richard Stong \cite{RS} giving criteria for when the asymmetric nature of one matrix guarantees the asymmetry of another.  We finish creating the families of asymmetric matrices that we need to give an upper bound on the distinguishing cost in Section~\ref{sec:const}, while a lower bound is proved in Section~\ref{sec:lowerbd}.  In Section~\ref{sec:distcost}, we pull the previously established bounds together to state our final results and to examine its consequences. In Section~\ref{sec:DetNum} we connect the results to determining numbers.  Finally, in Section~\ref{sec:quest} we provide some open problems. Appendix~\ref{ap:small} provides existence proofs for certain small dimension asymmetric matrices that were claimed in Section~\ref{sec:const}.


\section{Characteristic matrices}\label{sec:charmat}

\1In this paper all matrices will be binary.  This fact will be mentioned explicitly at times, but not always.

\begin{definition} \rm Let $S=\{V_1,\ldots, V_m\}$ be an ordered set of vertices of $Q_n$, each written as a binary string of length $n$.   Define $X$ to be the $m{\times} n$ matrix whose $i^{th}$ row contains the coordinates for $V_i$. Call $X$ a {\it characteristic matrix} of $S$.\end{definition}   

\1We want to explicitly show the correspondence between automorphisms of $Q_n$ and actions on the characteristic matrix.  As is usual, we will consider each coordinate of a binary string representing $V\in V(Q_n)$ as an element of $\Z_2$. 

\begin{theorem} \rm \cite{HIK} The map $\varphi\in{\rm Aut}(Q_n)$ if and only if there is a permutation $\pi\in S_n$ and for $1\leq i \leq n$, isomorphisms $\psi_i:\Z_2 \to \Z_2$ so that $$\varphi(v_1\cdots v_n)=(\psi_{\pi^{-1}(1)}(v_{\pi^{-1}(1)}) \cdots, \psi_{\pi^{-1}(n)}(v_{\pi^{-1}(n)})).$$  Denote $\varphi$ as $(\pi, \{\psi_i\})$.\end{theorem}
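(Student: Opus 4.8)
The plan is to prove both implications, treating the reverse (sufficiency) direction as a direct verification and concentrating effort on the forward direction. For the reverse direction, I would check that any map of the form $(\pi,\{\psi_i\})$ is a bijection of $\Z_2^n$ and preserves adjacency: if $u$ and $v$ differ in exactly one coordinate $j$, then because each $\psi_i$ is a bijection of $\{0,1\}$ and $\pi$ merely relocates coordinates, the images differ in exactly the coordinate $\pi(j)$. Applying the same argument to the inverse (which has the same form) shows non-adjacency is preserved as well, so $\varphi\in\aut(Q_n)$.

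For the forward direction, I would first reduce to the stabilizer of the all-zeros vertex $\mathbf 0$. Each translation $\tau_c\colon v\mapsto v+c$ (for $c\in\Z_2^n$) is an automorphism of the stated form: it has $\pi=\mathrm{id}$, with $\psi_i$ equal to the identity or the flip $x\mapsto x+1$ according as $c_i=0$ or $c_i=1$. Given an arbitrary $\varphi\in\aut(Q_n)$, set $c=\varphi(\mathbf 0)$; then $\tau_c\circ\varphi$ fixes $\mathbf 0$, and since maps of the stated form are closed under composition, it suffices to prove the claim for automorphisms fixing $\mathbf 0$. An automorphism $\varphi$ fixing $\mathbf 0$ permutes the neighbors of $\mathbf 0$, which are precisely the weight-one vectors $e_1,\dots,e_n$; this permutation defines the candidate $\pi\in S_n$ via $\varphi(e_i)=e_{\pi(i)}$.

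The crux, and the step I expect to be the main obstacle, is to show that a $\mathbf 0$-fixing $\varphi$ acts on \emph{every} vertex as the coordinate permutation $\pi$, i.e.\ that it is completely determined by its action on the neighbors of $\mathbf 0$. I would argue by induction on Hamming weight that $\varphi$ sends the vertex with support $T$ to the vertex with support $\pi(T)$. Weights $0$ and $1$ are the base cases (the definitions of $\varphi(\mathbf 0)$ and of $\pi$). For a weight-$k$ vertex $v$ with $k\geq 2$ and support $T$, note that $\varphi$ preserves distance from $\mathbf 0$, equivalently Hamming weight, and that the weight-$(k-1)$ neighbors of $v$ are exactly the $k$ vertices whose supports are $T$ minus one element. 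By the inductive hypothesis these are sent to the vertices whose supports are $\pi(T)$ minus one element, and the key combinatorial fact is that for $k\geq 2$ the unique weight-$k$ vertex adjacent to all of these is the one with support $\pi(T)$ (the union of any two such $(k-1)$-sets already recovers all of $\pi(T)$). Hence $\varphi(v)$ has support $\pi(T)$, completing the induction.

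Finally I would recombine with the translation: writing a general $\varphi$ as $\tau_c$ composed with the coordinate permutation $\pi$ and unwinding the indices—routine bookkeeping to match the subscript convention placing $\psi_{\pi^{-1}(j)}$ on coordinate $j$—yields exactly the stated formula, with each $\psi_i$ recording whether coordinate $i$ is flipped by the translation.
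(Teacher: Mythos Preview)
The paper does not prove this theorem; it is quoted with a citation to \cite{HIK} and used as a black box. So there is no ``paper's own proof'' to compare against. That said, your argument is correct and is the standard direct proof that $\aut(Q_n)\cong \Z_2^n\rtimes S_n$: reduce to the stabilizer of $\mathbf 0$ via a translation, observe that a $\mathbf 0$-fixing automorphism preserves Hamming weight (distance from $\mathbf 0$) and hence permutes the $e_i$, and then run the induction on weight using that a weight-$k$ vertex ($k\ge 2$) is the unique common weight-$k$ neighbor of its weight-$(k-1)$ neighbors. Each step is sound; the only cosmetic point is that in the recombination $\varphi=\tau_c\circ\sigma_\pi$ one finds $\psi_i(x)=x+c_{\pi(i)}$, so $\psi_i$ records the flip at coordinate $\pi(i)$ rather than $i$---exactly the bookkeeping you flag.

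For context, the reference \cite{HIK} treats $Q_n$ as the Cartesian power $K_2^n$ and derives its automorphism group from the general structure theorem for $\aut(G_1\,\square\cdots\square\, G_k)$ in terms of automorphisms of the prime factors together with permutations of isomorphic factors. Your approach is more elementary and self-contained (no prime factorization machinery), at the cost of being specific to $Q_n$; the product-graph route is more general but heavier. Either is perfectly adequate here.
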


\1We wish to translate the effect of $\varphi = (\pi,\{\psi_i\})\in {\rm Aut}(Q_n)$ on our ordered subset $S\subseteq V(Q_n)$ to its effect on the characteristic matrix $X$.  Since $\pi$ permutes the coordinates of each vertex, we can consider $\pi$ as a permutation of the columns of $X$. Since each $\psi_i$ applies one of the two automorphisms of $\Z_2$ to coordinate $i$ of each vertex, we can consider $\psi_i$ as applying to each entry in column $i$.  Thus given $\varphi = (\pi,\{\psi_i\})\in {\rm Aut}(Q_n)$, we can consider its action on $X$.  Further given any $\pi\in S_n$ and any $\{\psi_i\}\in (\aut(\Z_2))^n$, there is a natural action of $\varphi=(\pi, \{\psi_i\})$ on $X$ that corresponds to a unique $\varphi \in \aut(Q_n)$.  When referring to the characteristic matrix we will call $\varphi$ a {\it permaut} of the columns of $X$.  Denote the result of applying permaut $\varphi$ to the columns of $X$ by $X^\varphi$. By the definition of the action of $\varphi$ on $X$, $X^\varphi$ is the characteristic matrix of the ordered subset $\varphi(S)$.  Thus  $\varphi\in {\rm Aut}(Q_n)$ preserves the set $S$ if and only if $X$ and $X^\varphi$ have the same set of rows, possibly permuted. Denote the result of applying a  permutation $\sigma \in S_n$ to the rows of $X$  by $X_\sigma$. 

\begin{definition} \rm Let $X$ be a a binary matrix. If there exists a column permaut $\varphi$ and a row permutation $\sigma$ so that $X_\sigma = X^\varphi$, then we say that $(\sigma,\varphi)$ is a {\it symmetry} of $X$. If the only symmetry of $X$ is the trivial symmetry $(id,id)$, then we say that $X$ is {\it asymmetric}.\end{definition} 

\1A subset $S\subseteq V(Q_n)$ is a distinguishing class if and only if the only automorphism that preserves $S$ setwise is the trivial automorphism.  Thus, we can write the criterion for a subset $S$ to be a distinguishing class for $Q_n$ in terms of its characteristic matrix in the following way.

\begin{theorem}\label{thm:distasymm} \rm The ordered subset $S\subseteq V(Q_n)$ is a distinguishing class for $Q_n$ if and only if its characteristic matrix is asymmetric.\end{theorem}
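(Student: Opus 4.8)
The plan is to prove this biconditional by contraposition in both directions, leaning on the correspondence already established just before the statement: each column permaut $\varphi$ corresponds to a unique $\varphi \in \aut(Q_n)$, the matrix $X^\varphi$ is the characteristic matrix of the ordered subset $\varphi(S)$, and $\varphi$ preserves $S$ setwise if and only if $X$ and $X^\varphi$ have the same set of rows up to a permutation. The content of the theorem is therefore mostly a careful bookkeeping translation between automorphisms that fix $S$ and symmetries $(\sigma,\varphi)$ of $X$; the one genuine subtlety is disentangling the row permutation $\sigma$ from the column permaut $\varphi$.

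First I would prove that if $S$ is not a distinguishing class, then $X$ is not asymmetric. If $S$ is not distinguishing, there is a nontrivial $\varphi \in \aut(Q_n)$ preserving $S$. By the established correspondence, $X^\varphi$ and $X$ then have the same set of rows up to order, so there is a row permutation $\sigma$ with $X_\sigma = X^\varphi$. Since $\varphi \neq id$, the pair $(\sigma,\varphi)$ is a nontrivial symmetry of $X$, and hence $X$ is not asymmetric.

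Conversely, I would show that if $X$ is not asymmetric, then $S$ is not distinguishing. Suppose $(\sigma,\varphi) \neq (id,id)$ is a symmetry, so that $X_\sigma = X^\varphi$. The potential obstacle is that the nontriviality might live entirely in $\sigma$, that is, $\varphi = id$ while $\sigma \neq id$. Here I would invoke the fact that $S$ is a set of \emph{distinct} vertices, so the rows of $X$ are pairwise distinct: if $\varphi = id$, then $X_\sigma = X$, so $\sigma$ permutes the pairwise distinct rows of $X$ to themselves; distinctness then forces $\sigma$ to fix every row, whence $\sigma = id$, contradicting nontriviality. Therefore $\varphi \neq id$, and since $X_\sigma = X^\varphi$ exhibits $X^\varphi$ as a row-permutation of $X$, the associated nontrivial automorphism $\varphi$ preserves $S$. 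Thus $S$ is not distinguishing.

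I expect the only real obstacle to be exactly this decoupling of $\sigma$ and $\varphi$: one must verify that a symmetry of $X$ cannot be rendered nontrivial purely by relabeling identical rows. This is precisely why the distinctness of the vertices of $S$ is essential, and why the argument would break down if $S$ were allowed to be a multiset of vertices rather than a genuine set.
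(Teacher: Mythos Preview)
Your argument is correct and is essentially the approach the paper takes: the paper does not give a separate proof of this theorem, instead treating it as an immediate consequence of the correspondence set up in the paragraphs preceding it. Your write-up makes explicit the one point the paper leaves implicit, namely that a nontrivial symmetry of $X$ cannot arise with $\varphi = id$ because $S$ is a genuine set and so the rows of $X$ are distinct; this is precisely the content of one direction of the paper's later Lemma~\ref{lem:iff} (without needing the ``no isomorphic columns'' hypothesis), so your treatment is in fact slightly more self-contained than the paper's.
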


\1Below are useful tools for working with characteristic matrices, their column permauts, and their row permutations.  Since the proofs are straightforward, for efficiency, we will mostly point the way to the proofs.

\begin{lemma}\label{lem:Obs} \rm Let $X$ be a binary matrix with column permauts $\varphi,\omega$  and row permutations  $\sigma, \tau$. 

\begin{enumerate}[label=\alph*)] 

\item Using map composition we easily get that \begin{enumerate}[label=(\roman*)]

\item $(X^\varphi)^\omega = X^{\omega \varphi}$ \item  $(X_\sigma)_\tau = X_{\tau \sigma}$ \item $(X_\sigma)^\alpha = (X^\alpha)_\sigma$. 

\end{enumerate}

\item \label{lem:ObsPermAuts:1}  $\sigma$ preserves the property of two columns being (or not being) isomorphic. 

\1That is,  columns $i$ and $j$ of $X$ are isomorphic if and only if  columns $i$ and $j$ of $X_\sigma$ are isomorphic.

\item \label{lem:ObsPermAuts:2} $\varphi$ preserves the distinctness (or nondistinctness) of rows.  

\1That is, row $i$ and $j$ of $X$ are distinct if and only if  rows $i$ and $j$ of $X^\varphi$ are distinct.

\item \label{lem:ObsPermAuts:3} $X$ is asymmetric if and only if $X^\omega$ is asymmetric if and only if $X_\tau$ is asymmetric.  

\1This is true because $(\sigma,\varphi)$ is a symmetry of $X$  if and only if $(\sigma,\varphi)$ is a symmetry of $X_\tau$ if and only if $(\sigma, \omega\varphi \omega\inv)$ is a symmetry of $X^\alpha$.

\end{enumerate}\end{lemma}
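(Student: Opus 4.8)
The plan is to prove Lemma~\ref{lem:Obs} part by part, with each claim reducing to tracking how the two operations (column permauts and row permutations) interact. For part (a), the key observation is that applying a permaut to columns and applying a permutation to rows are operations that act on disjoint structures (columns versus rows), so they commute; meanwhile composition within each type is governed by ordinary function composition. I would set up notation carefully: write $X^\varphi$ as the matrix whose $(i,j)$ entry is obtained by the coordinate-relabeling-and-column-moving action of $\varphi$, and $X_\sigma$ as the matrix whose row $i$ is row $\sigma^{-1}(i)$ of $X$. Then (i) and (ii) follow by chasing an arbitrary entry through the two successive actions and comparing with the composite action; the order reversal in $X^{\omega\varphi}$ versus $(X^\varphi)^\omega$ is exactly the usual contravariance of function composition on a set being acted upon, and similarly for $(X_\sigma)_\tau = X_{\tau\sigma}$. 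For (iii), commutativity is immediate once one notes that the row operation never touches which column an entry sits in (nor the isomorphism $\psi_i$ applied), and the column operation never touches which row an entry sits in.

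For part \ref{lem:ObsPermAuts:1}, I would argue that two columns $i$ and $j$ being isomorphic means there is an isomorphism $\psi:\Z_2\to\Z_2$ matching entries of column $i$ to entries of column $j$ in every row. Permuting the rows by $\sigma$ applies the \emph{same} reordering to both columns simultaneously, so the entrywise correspondence witnessed by $\psi$ is preserved: $\psi$ still matches column $i$ of $X_\sigma$ to column $j$ of $X_\sigma$. The converse follows by applying $\sigma^{-1}$. Part \ref{lem:ObsPermAuts:2} is the dual statement: two rows being distinct means they differ in some coordinate, and applying a column permaut $\varphi$ relabels coordinates within columns and reorders columns, but does so identically across all rows; hence if two rows differed in some column before, their images differ in the corresponding column after (an isomorphism of $\Z_2$ is injective, so it cannot collapse a difference), and conversely.

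For part \ref{lem:ObsPermAuts:3}, the heart is the conjugation identity already flagged in the statement. I would show directly that $(\sigma,\varphi)$ is a symmetry of $X$, i.e.\ $X_\sigma = X^\varphi$, if and only if $(\sigma,\varphi)$ is a symmetry of $X_\tau$, and if and only if $(\sigma,\omega\varphi\omega^{-1})$ is a symmetry of $X^\omega$. The first equivalence uses part (a): from $X_\sigma = X^\varphi$ one applies $\tau$ to rows on both sides and invokes (a)(ii)/(a)(iii) to rearrange into a symmetry statement for $X_\tau$ (one must track that the correct conjugate row permutation appears, which is why the equivalence is stated for $X_\tau$ with the same $\sigma$ up to the bijection between symmetries). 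For the second equivalence, applying $\omega$ to the columns of both sides of $X_\sigma = X^\varphi$ and using (a)(i) and (a)(iii) yields $(X^\omega)_\sigma = X^{\omega\varphi} = (X^\varphi)^\omega = \bigl(X^\omega\bigr)^{\omega\varphi\omega^{-1}}$, which is precisely the symmetry $(\sigma,\omega\varphi\omega^{-1})$ of $X^\omega$. Since these correspondences are bijections between symmetry groups that carry the trivial symmetry to the trivial symmetry, asymmetry of $X$ is equivalent to asymmetry of $X^\omega$ and of $X_\tau$.

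I expect the main obstacle to be bookkeeping rather than conceptual difficulty: getting the direction of every conjugation and composition exactly right, particularly confirming that the bijection of symmetry groups in part \ref{lem:ObsPermAuts:3} sends identity to identity (so that triviality of one symmetry group forces triviality of the other). The cleanest way to avoid sign-of-composition errors is to fix once and for all whether permauts and permutations act on the left or right, verify parts (a)(i)--(iii) as the foundational commutation and composition rules, and then derive parts \ref{lem:ObsPermAuts:1}--\ref{lem:ObsPermAuts:3} strictly as formal consequences of (a) together with the injectivity of $\Z_2$-isomorphisms. Because the underlying group actions are genuinely just permutations acting on indices, no step should require more than a short entry-chase, which is why the statement itself advertises that it will only "point the way" to the proofs.
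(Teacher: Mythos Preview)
Your proposal is correct and follows essentially the same route the paper hints at: establish the composition/commutation identities of part (a) by entry-chasing, derive parts \ref{lem:ObsPermAuts:1} and \ref{lem:ObsPermAuts:2} from the fact that row permutations act identically on all columns (and column permauts identically on all rows, with $\Z_2$-isomorphisms injective), and then obtain part \ref{lem:ObsPermAuts:3} by exhibiting the conjugation bijection between symmetry groups. You are in fact more careful than the paper on one point: the paper's inline justification asserts that $(\sigma,\varphi)$ is a symmetry of $X$ iff $(\sigma,\varphi)$ is a symmetry of $X_\tau$, whereas your computation correctly shows the row permutation must be conjugated to $\tau\sigma\tau^{-1}$; since this conjugation is a bijection fixing the identity, the asymmetry conclusion is unaffected, and your remark that one ``must track that the correct conjugate row permutation appears'' is exactly right.
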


\1Two different orderings of a set produce characteristic matrices that differ only by a row permutation.  By Lemma~\ref{lem:Obs}\ref{lem:ObsPermAuts:3}, either both are asymmetric or neither is.  Thus we needn't worry about the order of the set $S$.  

\begin{lemma} \label{lem:assump} \rm Let $X$ be an asymmetric $m{\times} n$ binary matrix.  Then $m<2^n$, $n<2^{m-1}$, $X$ has no pair of equal rows, and $X$ has no pair of isomorphic columns.\end{lemma}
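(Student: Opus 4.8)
The plan is to observe that the two counting inequalities ($m<2^n$ and $n<2^{m-1}$) rest on the two structural facts in the statement (no equal rows, no isomorphic columns), so I would prove the structural facts first and then read off the bounds. In every case the strategy is the same: assume the conclusion fails and manufacture an explicit \emph{nontrivial} symmetry $(\sigma,\varphi)$ of $X$, contradicting asymmetry. So I would carry out the four parts in the order (3), (4), (1), (2), since (1) uses (3) and (2) uses (4).

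For the no-equal-rows claim, suppose rows $r$ and $s$ of $X$ are identical and let $\sigma$ be the transposition swapping them. Swapping two equal rows changes nothing, so $X_\sigma=X=X^{id}$, and $(\sigma,id)$ is a symmetry with $\sigma\neq id$; this contradicts asymmetry. For the no-isomorphic-columns claim, suppose columns $i$ and $j$ satisfy $(\text{col }j)=\psi(\text{col }i)$ for some $\psi\in\aut(\Z_2)$. Since every element of $\aut(\Z_2)$ is an involution, also $(\text{col }i)=\psi(\text{col }j)$. Let $\varphi$ be the permaut whose column permutation is the transposition $(i\,j)$, with $\psi_i=\psi_j=\psi$ and all other maps the identity. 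A short index chase through the definition of $X^\varphi$ shows this fixes every column, so $X^\varphi=X=X_{id}$, making $(id,\varphi)$ a nontrivial symmetry. Both arguments are routine once the right symmetry is written down.

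Now the bounds. For $m<2^n$: by part (3) the $m$ rows are distinct binary strings of length $n$, so $m\le 2^n$. If $m=2^n$ the rows are \emph{all} of $\{0,1\}^n$; then for any nontrivial permaut $\varphi$ (e.g.\ complementing the first column, using $n\ge 1$), $\varphi$ permutes $\{0,1\}^n$ bijectively, so $X^\varphi$ has the same row set as $X$ and hence $X_\sigma=X^\varphi$ for some $\sigma$, giving a nontrivial symmetry; thus $m<2^n$. For $n<2^{m-1}$: complementation is a fixed-point-free involution on $\{0,1\}^m$ (a string can never equal its own complement), so the isomorphism classes of columns each have size exactly $2$, giving $2^{m-1}$ classes. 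By part (4) the $n$ columns lie in distinct classes, so $n\le 2^{m-1}$.

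The main obstacle is the strict inequality $n<2^{m-1}$, i.e.\ ruling out $n=2^{m-1}$. Here the columns form a complete set of class representatives, and the idea is that any nontrivial row permutation $\sigma$ (taking $m\ge 2$) acts on $\{0,1\}^m$ commuting with complementation, hence permutes the isomorphism classes of columns bijectively; this is exactly the content of Lemma~\ref{lem:Obs}\ref{lem:ObsPermAuts:1}. Consequently the columns of $X_\sigma$ again hit each class exactly once, and matching each column of $X_\sigma$ to the unique column of $X$ in its class defines a column permutation together with the required maps $\psi_k\in\aut(\Z_2)$, yielding a permaut $\varphi$ with $X_\sigma=X^\varphi$. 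Since $\sigma\neq id$, this is a nontrivial symmetry, a contradiction, so $n<2^{m-1}$. The delicate points are verifying that the induced map on classes is a genuine permutation and that the matching really assembles into a single permaut; these deserve care, whereas the tiny degenerate cases (such as $m=1$) are excluded by the standing context $m\ge 2$.
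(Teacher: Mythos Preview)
Your proposal is correct and follows essentially the same approach as the paper: exhibit a nontrivial symmetry $(\sigma,id)$ from equal rows and $(id,\varphi)$ from isomorphic columns, deduce the nonstrict bounds by counting, and then rule out the equality cases $m=2^n$ and $n=2^{m-1}$ by taking any nontrivial permaut (respectively row permutation), invoking Lemma~\ref{lem:Obs}\ref{lem:ObsPermAuts:2} (respectively \ref{lem:ObsPermAuts:1}) to see that the rows (respectively column classes) are merely permuted, and reading off the matching $\sigma$ (respectively $\varphi$). Your treatment is a bit more explicit than the paper's, which dispatches the $n=2^{m-1}$ case with ``a similar argument,'' but the substance is identical.
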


\begin{proof} \1If a $m\times n$ binary matrix $X$ has two equal rows, those rows can be transposed by $\sigma$ without changing the matrix.  Thus $X$ has the nontrivial symmetry $(\sigma, id)$.  Similarly if $X$ has two isomorphic columns there is a nontrivial symmetry $(id, \varphi)$ of $X$. If $m>2^n$ then $X$ must have two equal rows, and if $n>2^{m-1}$  then $X$ must have two isomorphic columns.  Further, if $X$ has distinct rows and $m=2^n$, since by  Lemma~\ref{lem:Obs}\ref{lem:ObsPermAuts:2} any permaut $\varphi$ preserves the distinctness of the rows of $X$,  $X^\varphi$ also contains $2^n$ distinct rows.  But there are only $2^n$ distinct binary strings of length $n$.  So $X$ and $X^\varphi$ have the same set of rows.  Thus there is a row permutation $\sigma$ so that $X^\varphi=X_\sigma$, and $X$ has a nontrivial symmetry. A similar argument shows that if $X$ has $2^{m-1}$ non-isomorphic columns, then $X$ has a nontrivial symmetry. \end{proof}

\begin{definition} \rm Let $X$ be a binary matrix.  The {\it weight} of a row or column of $X$ is the number of ones it contains. A column of length $m$ with at most $\lfloor \frac{m}{2} \rfloor$ ones is called a {\it low weight column}. A matrix $X$ is said to be {\it low weight} if each of its columns is low weight.  A column (respectively matrix) can be called {\it strictly low weight} if its weight is strictly less than $\frac{m}{2}$ (respectively all matrix columns have weight  strictly less than $\frac{m}{2}$).  We will use the term {\it high weight} to denote the property of not being low weight.\end{definition}

\1Given a  high weight matrix $Y$, there is a permaut $\alpha$ so that $Y^\alpha$ is a low weight matrix.  Again by Lemma~\ref{lem:Obs}\ref{lem:ObsPermAuts:3}, either both of $Y$ and $Y^\alpha$ are asymmetric, or neither is. Thus for the remainder of this paper we will (mostly) restrict our attention to low weight binary matrices. 

 \begin{lemma}\label{lem:Rules} \rm  Let $X$ be a binary $m
{\times}n$ matrix. Suppose that $(\sigma, \varphi)$ is a symmetry of $X$. Let $\varphi = (\pi,\{\psi_i\})$.  
\begin{enumerate}[label=\alph*)]

\item \label{lem:Rules:1}  The permaut $\varphi$ can only permute columns with the same weight.  That is, if column $i$ has weight $k$ then so does column $\pi(i)$ after applying $\psi_i$. 

Since a row permutation only rearranges, but does not change, the elements of each column, $\sigma$ preserves the number of ones in each column. Since $X_\sigma = X^\varphi$, $\varphi$ must preserve the number of  ones in each column as well. 

\item \label{lem:Rules:2} If $X$ is low weight, then $\varphi$ can only provide nontrivial automorphisms on columns of $X$ that do not have weight precisely $\frac{m}{2}$. That is, if $X$ is low weight and $\psi_i$ is nontrivial, then column $i$ has weight precisely $\frac{m}{2}$.  Futher, if  $X$ is strictly low weight, then $\varphi = \pi$, a permutation of the columns.

Applying the nontrivial automorphism of $\Z_2$ to a strictly low weight column produces a high weight column.  Since by~\ref{lem:Rules:1} above, $\varphi$ can only permute columns of the same weight and all columns of $X$ are low weight, if the weight of column $i$ is not $\frac{m}{2}$, then  $\psi_i$ must be trivial.

\item \label{lem:Rules:3} If $X$ is strictly low weight, then $\sigma$ can only permute rows with the same weight.  That is, if $\sigma(i){=}j$, then row $i$ and row $j$ have the same weight.

By~\ref{lem:Rules:2} above, since $X$ is strictly low weight,  $\varphi = \pi$ is a simple permutation of the columns of $X$.  Thus $\varphi$ rearranges, but does not change, the elements of each row, $\varphi$ preserves the number of ones in each row. Since $X_\sigma = X^\varphi$,  $\sigma$ must also preserve the weight of each row.

\item \label{lem:Rules:4} If $X$ is strictly low weight, and if each row of $X$ has strictly fewer than $\frac{n}{2}$ ones, then $X$ is asymmetric if and only if its transpose is asymmetric. 

By assumption on $X$, both $X$ and $X^T$ are strictly low weight binary matrices.  Thus, by \ref{lem:Rules:2} above, for each of $X$ and $X^T$ the only symmetries are strict permutations of the rows and strict permutations of the columns.  Since $X$ and $X^T$ exchange the roles of rows and columns, given a symmetry $(\sigma, \varphi)$ of $X$ we an exchange the roles of the row permutation and the column permutation to get a symmetry $(\varphi, \sigma)$ of $X^T$.\end{enumerate}\end{lemma}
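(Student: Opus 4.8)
The plan is to prove the four parts in the given order, since each builds on its predecessors, and to reduce everything to three elementary facts about how the ingredients of a symmetry affect weights. A row permutation $\sigma$ only rearranges the entries within each column, so it preserves every column weight; dually, a pure column permutation rearranges the entries within each row, so it preserves every row weight; and the nontrivial automorphism of $\Z_2$, namely bit-complementation, sends a column of weight $k$ to one of weight $m-k$. All four statements are bookkeeping with these observations, combined with the standing identity $X_\sigma = X^\varphi$.

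For part~\ref{lem:Rules:1}, I would note that because $X_\sigma$ is obtained from $X$ by permuting rows, each column of $X_\sigma$ has the same weight as the corresponding column of $X$; since $X_\sigma = X^\varphi$, the same is true of $X^\varphi$. Unwinding how $\varphi = (\pi,\{\psi_i\})$ builds $X^\varphi$ --- apply $\psi_i$ to column $i$, then send it to position $\pi(i)$ --- this forces column $i$ (after $\psi_i$) to have the same weight as column $\pi(i)$ of $X$, which is the claim. Part~\ref{lem:Rules:2} is where low weight enters: if some $\psi_i$ were nontrivial on a column of weight $k<\frac{m}{2}$, complementation would produce a column of weight $m-k>\frac{m}{2}$, a high weight column, which by~\ref{lem:Rules:1} cannot match the weight of any (low weight) column of $X$; hence a nontrivial $\psi_i$ can occur only on a column of weight exactly $\frac{m}{2}$, and when $X$ is strictly low weight no such column exists, so $\varphi=\pi$. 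Part~\ref{lem:Rules:3} then follows at once: with $\varphi=\pi$ a pure column permutation, $X^\varphi$ has the same row-weight sequence as $X$, while $X_\sigma$ has those weights permuted by $\sigma$, so $X_\sigma = X^\varphi$ forces $\sigma$ to preserve each row weight.

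For part~\ref{lem:Rules:4}, the hypothesis that every row has fewer than $\frac{n}{2}$ ones is exactly the statement that $X^T$ is strictly low weight, so by~\ref{lem:Rules:2} every symmetry of either $X$ or $X^T$ has a pure column permutation as its permaut. Given such a symmetry $(\sigma,\pi)$ of $X$, I would transpose the identity $X_\sigma = X^\pi$ and use that transposition exchanges the roles of rows and columns to obtain $(X^T)^\sigma = (X^T)_\pi$, i.e.\ $(\pi,\sigma)$ is a symmetry of $X^T$. This map is a bijection carrying the trivial symmetry to the trivial symmetry and nontrivial ones to nontrivial ones, so $X$ is asymmetric if and only if $X^T$ is.

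The work here is not any single deduction but keeping the conventions strictly consistent --- the precise direction in which $\sigma$ and $\varphi$ act and the order of composition --- so that the weight tracking in~\ref{lem:Rules:1}--\ref{lem:Rules:3} and the transpose identity in~\ref{lem:Rules:4} come out right. The one genuinely load-bearing idea is the strict inequality: it eliminates the awkward weight-$\frac{m}{2}$ columns, collapsing every symmetry into a pair of honest permutations and thereby making the row/column duality in part~\ref{lem:Rules:4} clean.
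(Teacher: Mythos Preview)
Your proposal is correct and follows essentially the same route as the paper: in each part you use the identity $X_\sigma = X^\varphi$ together with the elementary facts that row permutations preserve column weights, pure column permutations preserve row weights, and complementation flips weight $k$ to $m-k$, then in part~\ref{lem:Rules:4} you exploit strict low weight on both sides to reduce every symmetry to a pair of pure permutations and transpose. Your write-up is slightly more explicit than the paper's (for instance, you spell out the bijection of symmetries and the transposed identity in~\ref{lem:Rules:4}), but the ideas and their order are the same.
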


\begin{lemma}\label{lem:iff} \rm Let $X$ be a binary matrix and $(\sigma,\varphi)$ a symmetry of $X$. If $X$ has no identical rows and no isomorphic columns, then $\sigma$ is the identity row permutation if and only if $\varphi$ is the identity column permaut.\end{lemma}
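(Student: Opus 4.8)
The plan is to prove each of the two implications directly from the defining equation $X_\sigma = X^\varphi$ of a symmetry, invoking the two non-degeneracy hypotheses (no identical rows, no isomorphic columns) one at a time. Neither direction requires any of the earlier machinery beyond the definition of the action of $\varphi = (\pi,\{\psi_i\})$ on columns.

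First I would dispatch the easier direction, $\varphi = id \Rightarrow \sigma = id$. If $\varphi$ is the trivial permaut, the symmetry equation collapses to $X_\sigma = X$, which says that permuting the rows of $X$ by $\sigma$ returns $X$ unchanged. Were $\sigma$ nontrivial, there would be an index $i$ with $\sigma(i)\neq i$, and comparing the corresponding rows of $X_\sigma$ and $X$ exhibits two identical rows of $X$, contradicting the hypothesis. Hence $\sigma = id$.

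For the converse, $\sigma = id \Rightarrow \varphi = id$, the equation becomes $X = X^\varphi$, and I would peel the permaut apart in two stages. Writing $\varphi=(\pi,\{\psi_i\})$, comparing columns in $X = X^\varphi$ shows that column $j$ of $X$ equals $\psi_{\pi^{-1}(j)}$ applied to column $\pi^{-1}(j)$ of $X$; thus columns $j$ and $\pi^{-1}(j)$ are isomorphic. Since $X$ has no pair of isomorphic columns, we are forced to have $\pi^{-1}(j)=j$ for every $j$, i.e.\ $\pi = id$. With the column permutation now trivial, the equation reduces to $\psi_j(\text{column }j)=\text{column }j$ for each $j$. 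Because the nontrivial automorphism of $\Z_2$ complements every entry, and no nonempty binary column equals its own complement, each $\psi_j$ must be the identity. Therefore $\varphi = id$.

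The argument is elementary, and I do not anticipate a genuine obstacle; the only point deserving care is in the converse, where one must remove the column permutation $\pi$ \emph{before} concluding that the per-column maps $\psi_j$ are trivial, and note that a nontrivial $\psi_j$ cannot fix any nonempty column. (Implicitly one uses $m\geq 1$ and $n\geq 1$, which is harmless since the statement is vacuous for an empty matrix.)
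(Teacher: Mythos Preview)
Your proof is correct and follows essentially the same approach as the paper's: in each direction you collapse the symmetry equation and then use the relevant non-degeneracy hypothesis, first eliminating the permutation $\pi$ and afterwards the per-column maps $\psi_j$ in the $\sigma=id\Rightarrow\varphi=id$ direction. The only cosmetic difference is that the paper treats the directions in the opposite order.
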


\begin{proof} Let $\varphi=(\pi,\{\psi_i\})$.  Suppose that $\sigma$ is trivial.  Then $X = X_\sigma = X^\varphi$.  Suppose that $\pi(i)=j$ with $i\ne j$.  Then by definition of $\varphi$, column $i$ of $X$ is isomorphic to column $j$ of $X^\varphi=X$.  This contradicts the choice of $X$.  Thus $\pi$ is the identity.  Further, since $\sigma$ is trivial, $X$ and $X_\sigma = X^\varphi$ have the same values in each position of each column.  However, since $\pi$ is trivial,  a nontrivial $\psi_i$ must change the value of each entry in column $i$.  Thus each $\psi_i$ is trivial.  Thus $\varphi$ itself is trivial.

\1Suppose that $\varphi$ is trivial.  Then $X_\sigma = X^\varphi = X$.  Thus after performing the row permutation $\sigma$ we have the same columns we started with. Suppose $\sigma(i)=j$.  Since $X$ has no identical rows,  row $i$ and row $j$ differ in some position, say $k$. After performing $\sigma$, column $k$ has a different value in positions $i$ and $j$.  But since $X^\varphi=X$, this can't happen.  Thus $\sigma$ is trivial. \end{proof}

\1The following lemmas provide tools we will use later in the paper.

\begin{lemma}\label{lem:extracolumns} \rm If $X$ is an asymmetric $m\times r$ matrix, and $Y$ is $m{\times} s$ matrix with no pair of isomorphic columns and all column weights different than the column weights in $X$, then the $m{\times} (r+s)$ concatenation $XY$ is also asymmetric.\end{lemma}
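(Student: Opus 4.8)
The plan is to take an arbitrary symmetry $(\sigma,\varphi)$ of the concatenation $Z=XY$, write $\varphi=(\pi,\{\psi_i\})$, and argue it must be trivial. Since the paper has already reduced to the low-weight setting, I would work under the standing assumption that $X$ and $Y$, and hence $Z$, are low weight. The whole argument then rests on a single bookkeeping fact drawn from Lemma~\ref{lem:Rules}\ref{lem:Rules:1}: because $\sigma$ leaves the weight of each column unchanged and $X_\sigma=X^\varphi$, the column placed at position $\pi(i)$ must have the same weight as $\psi_i$ applied to column $i$, namely weight $k$ if $\psi_i$ is trivial and $m-k$ if $\psi_i$ is nontrivial. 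Moreover, by Lemma~\ref{lem:Rules}\ref{lem:Rules:2}, in the low-weight matrix $Z$ a nontrivial $\psi_i$ can occur only on a column of weight exactly $\frac{m}{2}$, where $m-k=k$ anyway.

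First I would show that $\pi$ preserves the two column blocks setwise, i.e. cannot carry an $X$-column into the $Y$-block or conversely. If column $i$ lies in the $X$-block with weight $k\le\lfloor m/2\rfloor$, the fact above forces the weight of column $\pi(i)$ in $Z$ to equal $k$: either $\psi_i$ is trivial, or $k=\frac{m}{2}$ so that $m-k=k$. By hypothesis no column of $Y$ has weight $k$, so $\pi(i)$ must be an $X$-position; counting then gives that $\pi$ restricts to permutations $\pi_X$ and $\pi_Y$ of the two blocks. The same weight computation, applied across the blocks, shows that $Z$ has no isomorphic columns: an $X$-column of weight $k$ is isomorphic only to a column of weight $k$ or $m-k$, and no $Y$-column has either weight (for $k<\frac{m}{2}$ the complementary weight $m-k$ exceeds $\lfloor m/2\rfloor$ and so is not low weight; for $k=\frac{m}{2}$ it reduces to $k$). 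Since $X$ has no equal rows by Lemma~\ref{lem:assump}, neither does the wider matrix $Z$, so $Z$ is now set up for Lemma~\ref{lem:iff}.

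Next I would restrict the symmetry to the $X$-block. Reading off the first $r$ columns of the equation $Z_\sigma=Z^\varphi$, and using that $\pi$ fixes this block setwise, yields $X_\sigma=X^{\varphi_X}$ with $\varphi_X=(\pi_X,\{\psi_i\}_{i\in X})$; hence $(\sigma,\varphi_X)$ is a symmetry of $X$. Because $X$ is asymmetric, $\sigma=\mathrm{id}$. Finally, with $\sigma$ trivial and $Z$ having neither equal rows nor isomorphic columns, Lemma~\ref{lem:iff} forces $\varphi=\mathrm{id}$ as well. Thus $(\sigma,\varphi)$ is trivial and $Z=XY$ is asymmetric, as claimed.

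I expect the block-preservation step to be the one delicate point, and specifically the role of the nontrivial column automorphisms $\psi_i$. A naive reading of ``different column weights'' is not by itself enough, since complementing a column sends weight $k$ to $m-k$, and a $Y$-column of that complementary weight would let $\pi$ mix the blocks and break the argument. The low-weight hypothesis is exactly what rules this out, by pushing $m-k$ above $\lfloor m/2\rfloor$ except in the self-complementary case $k=\frac{m}{2}$, so I would be careful to invoke it explicitly rather than treat the weight condition as self-evidently sufficient.
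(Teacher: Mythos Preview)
Your proof is correct and follows essentially the same line as the paper's: use column weights (via Lemma~\ref{lem:Rules}\ref{lem:Rules:1}) to see that $\varphi$ preserves the $X$- and $Y$-blocks, restrict to $X$ to force $\sigma=\mathrm{id}$, then apply Lemma~\ref{lem:iff}. You are more explicit than the paper about the low-weight standing assumption and the role of the $\psi_i$ in the block-preservation and no-isomorphic-columns steps, which the paper handles tersely; that extra care is justified.
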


\begin{proof} Suppose $(\sigma, \varphi)$ is a symmetry of $XY$. By Lemma~\ref{lem:Rules}\ref{lem:Rules:1}, $\varphi$ preserves weight classes of columns.  Since the weights of columns of $X$ are distinct from the weights of the columns of $Y$, $\varphi$ preserves the set of columns of $X$ and of $Y$.  Thus $\varphi$ can be decomposed into $\varphi_x$, the action of columns of $X$, and  $\varphi_y$, the action on the columns of $Y$.  That is, $(XY)^\varphi = X^{\varphi_x} Y^{\varphi_y}$. Recall that $\sigma$ is defined on rows of a matrix, without regard the the number of columns.  Thus $X^{\varphi_x} Y^{\varphi_y} = (XY)^\varphi = (XY)_\sigma = X_\sigma Y_\sigma$, and therefore, $X_\sigma = X^{\varphi_x}$.  Since $X$ is asymmetric by assumption, this means that each of $\sigma$ and $\varphi_x$ is trivial.  Since $XY$ has no identical rows and no isomorphic columns by Lemma~\ref{lem:iff}, the triviality of $\sigma$ guarantees the triviality of $\varphi$.  Thus $XY$ is asymmetric.\end{proof}

\1A similar argument proves the following.

\begin{lemma}\label{lem:extrarows} \rm If $X$ is an asymmetric $k{\times} n$ matrix, and $Z$ is a $\ell \times n$ matrix with no pair of identical rows and all row weights different than the row weights in $X$, and the $(k+\ell){\times}n$ concatenation $XZ$ has no column of weight $\frac{k+\ell}{2}$, then $XZ$ is also asymmetric.\end{lemma}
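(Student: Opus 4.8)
The plan is to mirror the proof of Lemma~\ref{lem:extracolumns}, exchanging the roles of rows and columns, while accounting for the fact that the two situations are not perfectly symmetric. Suppose $(\sigma,\varphi)$ is a symmetry of $XZ$. In the column version, Lemma~\ref{lem:Rules}\ref{lem:Rules:1} guarantees for free that the column permaut $\varphi$ preserves column-weight classes, with no extra hypothesis on the matrix. For rows there is no such automatic statement: the assertion that a row permutation preserves row weights is Lemma~\ref{lem:Rules}\ref{lem:Rules:3}, which requires the matrix to be strictly low weight. This is precisely why the extra hypothesis ``no column of weight $\frac{k+\ell}{2}$'' is present. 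Working in the low weight setting, every column of $XZ$ has weight at most $\lfloor\frac{k+\ell}{2}\rfloor$; excluding weight exactly $\frac{k+\ell}{2}$ forces every column weight strictly below $\frac{k+\ell}{2}$, so $XZ$ is strictly low weight and Lemma~\ref{lem:Rules}\ref{lem:Rules:3} applies, giving that $\sigma$ preserves the weight of each row.

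With $\sigma$ preserving row weights and the row weights of $X$ all distinct from those of $Z$, the permutation $\sigma$ cannot move a row of the $X$-block into the $Z$-block or vice versa. Hence $\sigma$ splits as $\sigma_x$ acting on the $k$ rows of $X$ and $\sigma_z$ acting on the $\ell$ rows of $Z$. Because a column permaut acts on the columns without regard to the number of rows, $(XZ)^\varphi$ has top block $X^\varphi$ and bottom block $Z^\varphi$, while $(XZ)_\sigma$ has top block $X_{\sigma_x}$ and bottom block $Z_{\sigma_z}$. Equating $(XZ)_\sigma=(XZ)^\varphi$ on the top $k$ rows yields $X_{\sigma_x}=X^\varphi$, so $(\sigma_x,\varphi)$ is a symmetry of $X$. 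Since $X$ is asymmetric, both $\sigma_x$ and $\varphi$ are trivial; note that here $\varphi$ is a single permaut of the shared columns, so its triviality is now complete, with no appeal to Lemma~\ref{lem:iff} needed.

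It remains only to handle $\sigma_z$. With $\varphi$ trivial, the bottom block of the equation reads $Z_{\sigma_z}=Z^\varphi=Z$, so the row permutation $\sigma_z$ fixes $Z$; since $Z$ has no pair of identical rows, a nontrivial $\sigma_z$ would force two rows of $Z$ to coincide, a contradiction, and hence $\sigma_z$ is trivial. Therefore $\sigma$ and $\varphi$ are both trivial and $XZ$ is asymmetric. The one genuinely delicate point—and the reason the hypotheses differ from those of Lemma~\ref{lem:extracolumns}—is securing the strictly-low-weight condition that lets Lemma~\ref{lem:Rules}\ref{lem:Rules:3} control the row permutation; once that is in hand, the argument is a routine transcription of the earlier proof with rows and columns interchanged.
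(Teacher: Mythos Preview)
Your proof is correct and is exactly the ``similar argument'' the paper alludes to: you mirror the proof of Lemma~\ref{lem:extracolumns}, use the strictly-low-weight hypothesis to invoke Lemma~\ref{lem:Rules}\ref{lem:Rules:3} so that $\sigma$ respects the $X$/$Z$ block structure, and then finish by reducing to the asymmetry of $X$. Your closing step, handling $\sigma_z$ directly from the distinct rows of $Z$ rather than via Lemma~\ref{lem:iff}, is a minor but perfectly valid shortcut; the mirror of Lemma~\ref{lem:extracolumns} would instead note that $XZ$ has no identical rows (row weights separate $X$ from $Z$) and no isomorphic columns (inherited from $X$), and then apply Lemma~\ref{lem:iff} with $\varphi$ trivial.
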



\section{Some Small(ish) Asymmetric Matrices}\label{sec:basicasymm}

\1Section~\ref{sec:const} will cover the construction of many, mostly large, asymmetric binary matrices.  Here we will construct some basic, mostly small, examples on which we can  build later. We place these constructions in this early section for two reasons.  It gives us a chance the use the rules and observations from Section~\ref{sec:charmat}, and we can then use these matrices as building blocks at the beginning of Section~\ref{sec:const} without distraction.

\1Because we will wish to refer to integers within particular intervals, all numerical intervals in this paper are integer intervals.  That is for $r < s\in \Z$, $[r,s]$ is the set of integers inside the real number interval.

\begin{lemma}\label{lem:mminusone} \rm For $m\geq 5$ there exist asymmetric binary $m{\times}m$ and $m\times(m-1)$ matrices.\end{lemma}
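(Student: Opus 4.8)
The plan is to restrict attention to strictly low weight matrices and to translate the asymmetry question into a rigidity question about a marked graph. By Lemma~\ref{lem:Rules}\ref{lem:Rules:2}, if $X$ is strictly low weight then any symmetry $(\sigma,\varphi)$ has $\varphi=\pi$, a pure column permutation. Reading each column of $X$ as the subset of rows in which it carries a $1$, the equation $X_\sigma=X^\pi$ says exactly that $\sigma$ carries the family of column-sets to itself; that is, $\sigma$ is an automorphism of the set system (a ``marked graph'') whose vertices are the rows and whose hyperedges are the columns. So it suffices to exhibit, for each shape, a strictly low weight matrix whose columns are distinct and pairwise non-isomorphic, whose rows are distinct, and whose associated marked graph is rigid; Lemma~\ref{lem:iff} will then upgrade $\sigma=\mathrm{id}$ to $\varphi=\mathrm{id}$ and complete the proof of asymmetry.

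For the construction I would use only columns of weight $1$ (``marks'') and weight $2$ (``edges''). Since $2<m/2$ precisely when $m\ge 5$, every such column is strictly low weight in that range, and this is exactly where the hypothesis $m\ge 5$ enters (at $m=4$ a weight-$2$ column sits at $m/2$ and may be flipped, consistent with the known fact that $\rho(Q_4)=5>4$). For the $m\times m$ matrix, take the path $1\,2\,\cdots\,m$ together with a single mark on the endpoint $1$: this yields $m-1$ edge-columns plus one mark-column, for $m$ columns in all. For the $m\times(m-1)$ matrix, take the path $1\,2\,\cdots\,(m-1)$ with a mark on $1$ and leave vertex $m$ isolated: this yields $m-2$ edges plus one mark, for $m-1$ columns on $m$ rows.

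To finish I would verify the bookkeeping conditions and then rigidity. Distinct columns, together with the fact that the complement of a weight-at-most-$2$ column has weight at least $m-2\ge 3$, give that no two columns are isomorphic; the vertex incidence patterns are visibly distinct, so there are no equal rows; hence Lemma~\ref{lem:assump} and Lemma~\ref{lem:iff} apply. For rigidity, the unique weight-$1$ column is the mark, so any automorphism $\sigma$ must fix it and hence fix vertex $1$; when present, the isolated vertex is the unique all-zero row and is fixed as well; an automorphism of a path that fixes an endpoint is the identity, so $\sigma=\mathrm{id}$, and Lemma~\ref{lem:iff} then forces $\varphi=\mathrm{id}$. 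The one genuinely delicate point---and the main obstacle---is achieving rigidity with so few columns: a bare path of $m-1$ edges carries the reflection symmetry, so a symmetry-breaking device (here a single carefully placed weight-$1$ mark) is essential, and it must be introduced without creating a repeated column or a column of weight $m/2$ or higher.
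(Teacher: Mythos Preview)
Your construction is exactly the paper's: one weight-$1$ column marking row $1$ together with the weight-$2$ ``edge'' columns $\{j-1,j\}$ along a path, with the $m\times(m-1)$ case obtained by dropping the last edge (equivalently, deleting column $m$). Your rigidity argument (the mark pins vertex $1$, and a path automorphism fixing an endpoint is the identity) is a graph-theoretic repackaging of the paper's step-by-step induction showing column $k+1$ and then row $k+1$ are fixed; so the approach is essentially the same, and your proof is correct.
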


\begin{proof} Build an asymmetric $m{\times} m$ matrix $X$ in the following way. Let the first column of $X$ have a one in its first position and zeros elsewhere.  For $j\in [2, m]$ let column $j$ have ones in positions $j-1$ and $j$, and zeros elsewhere. See Figure~\ref{fig:seed} for an example of $X$ when $m=7$.  Note that columns 2 through $m$ have weight $2$, while column 1 has weight 1.  Further, for $j<m$, row $j$ of $X$ has ones in positions $j, j+1$, while row $m$ has a one only in the final position.  

\begin{figure}[htb]
$$\begin{array}{cccccccc} 
1 & 1 & 0 & 0 & 0 & 0 & 0 \\ 
0 & 1 & 1 & 0 & 0 & 0 & 0  \\ 
0 & 0 & 1 & 1 & 0 & 0 & 0 \\
0 & 0 & 0 & 1 & 1 & 0 & 0 \\
0 & 0 & 0 & 0 & 1 & 1 & 0 \\
0 & 0 & 0 & 0 & 0 & 1 & 1 \\
0 & 0 & 0 & 0 & 0 & 0 & 1 \end{array}$$
\caption{$X$ with $m=7$}
\label{fig:seed}
\end{figure}

\1Suppose there is a symmetry $(\sigma,\varphi)$ of $X$.  Since $m\geq 5$ and each column has at most 2 ones, $X$ is a strictly low weight matrix.  Thus by Lemma~\ref{lem:Rules}\ref{lem:Rules:2}, $\varphi$ acts strictly as a permutation on the columns of $X$, and by Lemma~\ref{lem:Rules}\ref{lem:Rules:1}, it can only permute columns of the same weight.  Thus as the only column of weight 1, the first column is fixed by $\varphi$. Since the first column is fixed by $\varphi$, the first row of $X^\varphi$ has a one in its first position.  Since the only row of $X$ with a one in its first position is the first row, $\sigma$ fixes the first row.  

\1Now assume $k<m$, that $\varphi$ fixes the first $k$ columns of $X$, and that $\sigma$ fixes the first $k$ rows.  By construction, column $k+1$ of $X$ has ones in positions $k, k+1$ and row $k+1$ of $X$ has ones in positions $k+1, k+2$.  Since by assumption $\sigma$ fixes row $k$ of $X$, column $k+1$ of $X_\sigma$ has a one in position $k+1$.  The only not-yet-known-to-be-fixed column with a one in position $k+1$ is column $k+1$ of $X$.  Thus column $k+1$ of $X_\sigma$ is column $k+1$ of $X$.  Thus $\varphi$ fixes column $k+1$. Now we can conclude that row $k+1$ of $X_\sigma$ has a one in position $k+1$.  The only not-yet-known-to-be-fixed row to have a one in position $k+1$ is row $k+1$ of $X$, so $\sigma$ fixes row $k+1$. Thus by induction, $\sigma$ fixes all rows and $\varphi$ fixes all columns of $X$, and so $(\sigma,\varphi)$ is trivial. Therefore $X$ is asymmetric. 

\1Note that if we delete the $m\th$ column we still have an asymmetric matrix.\end{proof}

\1The statement of the lemma below is also true for $m\in[8, 11]$.  However, the proof would be somewhat different, and we only need the lemma as stated. 

\begin{lemma}\label{lem:halfm} \rm For $m\geq 12$, there exists an asymmetric $m{\times} \lfloor \frac{m}{2} \rfloor$ binary matrix. \end{lemma}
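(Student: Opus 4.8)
The plan is to build the desired matrix by appending rows to a known small asymmetric matrix and then invoking Lemma~\ref{lem:extrarows}. Write $k = \lfloor \frac{m}{2}\rfloor$ and $\ell = m-k = \lceil \frac{m}{2}\rceil$, so the target matrix has shape $m\times k$. Since $m\geq 12$ forces $k\geq 6\geq 5$, Lemma~\ref{lem:mminusone} supplies an asymmetric $k\times k$ matrix $X$, and by its explicit construction the columns of $X$ have weight $1$ or $2$ and the rows of $X$ have weight $1$ or $2$. I would then adjoin an $\ell\times k$ block $Z$ whose rows are pairwise distinct and all of weight exactly $3$ (so their weights avoid those of $X$), and apply Lemma~\ref{lem:extrarows} to the $m\times k$ concatenation $XZ$.

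The three hypotheses of Lemma~\ref{lem:extrarows} are that $X$ is asymmetric (immediate), that $Z$ has distinct rows whose weights are disjoint from the row weights of $X$ (guaranteed by choosing distinct weight-$3$ rows, since $X$ has only rows of weight $1$ and $2$), and that $XZ$ has no column of weight $\frac{k+\ell}{2}=\frac{m}{2}$. The first two are bookkeeping; the third is the crux. When $m$ is odd, $\frac{m}{2}$ is not an integer, so the condition is vacuous and any $\ell$ distinct weight-$3$ rows of length $k$ suffice — and there are $\binom{k}{3}\geq \ell$ of them. The real content is therefore the even case $m=2k$, where I must keep every column weight of $XZ$ away from $\frac{m}{2}=k$.

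To control the column weights in the even case I would select the weight-$3$ rows of $Z$ so that each of the $k$ columns receives the same number of ones from $Z$. As $Z$ has $\ell=k$ rows of weight $3$, it contains $3k$ ones in all, so a balanced choice places exactly $3$ ones in each column. Then every column of $XZ$ has weight at most $2+3=5$, and the no-weight-$\frac{m}{2}$ condition holds precisely when $5<k$, i.e.\ when $k\geq 6$, i.e.\ when $m\geq 12$. This is exactly where the hypothesis $m\geq 12$ is forced, and it explains why the small even cases need separate handling. It remains to exhibit, for each $k\geq 6$, a family of $\ell$ distinct weight-$3$ subsets of $\{1,\ldots,k\}$ covering each coordinate the same number of times; this is a routine balanced selection (for the tightest case $m=12$, $k=\ell=6$, three partitions of $\{1,\ldots,6\}$ into two triples give six distinct triples covering each point exactly three times), and for larger $k$ the abundance of weight-$3$ vectors leaves ample slack.

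I expect this balanced choice of rows in the even case to be the only genuine obstacle; everything else reduces to citing Lemmas~\ref{lem:mminusone} and~\ref{lem:extrarows} and tracking weights. For contrast, one could instead attempt the transpose route of Lemma~\ref{lem:Rules}\ref{lem:Rules:4}, constructing a wide $k\times m$ asymmetric matrix by appending columns via Lemma~\ref{lem:extracolumns} and transposing; but that approach needs the wide matrix to be \emph{strictly} low weight, which fails at $k=6$ because the appended columns must have weight $\geq 3 = \frac{k}{2}$. The row-appending construction sidesteps this difficulty, which is why I would prefer it here.
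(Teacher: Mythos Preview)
Your proposal is correct and follows the same route as the paper: both start from the $k\times k$ asymmetric matrix of Lemma~\ref{lem:mminusone} and append $\ell$ rows via Lemma~\ref{lem:extrarows}, handling the even and odd cases separately. The only difference is in the choice of $Z$: the paper takes rows of weight $k-3$ (zeros in the cyclic block $\{i,i+1,i+2\}$ mod $k$, plus an all-zero row when $m$ is odd), which forces column weights $k-2,k-1<k$; you take rows of weight $3$, balanced so that column weights stay at most $5<k$. Your ``routine balanced selection'' for general even $k$ is made concrete by the cyclic triples $\{i,i+1,i+2\}$ mod $k$, which is literally the complement of the paper's $Z$---so the two constructions are dual rather than genuinely different.
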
 

\begin{proof}  Let $r=\lfloor \frac{m}{2} \rfloor$. Begin with the $r\times r$ asymmetric matrix whose construction we learned in the proof of Lemma~\ref{lem:mminusone}. We will see below how to construct an $(m-r){\times}n$ matrix $Z$ with no identical rows, all row weights different from those in $X$,  so that the $m\times n$ concatenation $XZ$ is strictly low weight.  By Lemma~\ref{lem:extrarows}, the resulting $XZ$ will be asymmetric. 

\1Suppose that $m$ is even.  Construct $Z$ so that the $i\th$ row contains zeros in positions $i, i+1, i+2$ (modulo $r$) and ones elsewhere.  As constructed, $Z$ has no equal rows, each row of $Z$ has weight $r-3$, and each column has weight $r-3$.  See Figure~\ref{fig:Z} for an example of $Z$ for $m=14$.

\begin{figure}[htb]
$$\begin{array}{cccccccc} 
0 & 0 & 0 & 1 & 1 & 1 & 1 \\ 
1 & 0 & 0 & 0 & 1 & 1 & 1  \\ 
1 & 1 & 0 & 0 & 0 & 1 & 1 \\
1 & 1 & 1 & 0 & 0 & 0 & 1 \\
1 & 1 & 1 & 1 & 0 & 0 & 0 \\
0 & 1 & 1 & 1 & 1 & 0 & 0 \\
0 & 0 & 1 & 1 & 1 & 1 & 0 \\
\phantom{a}\end{array}$$
\caption{$Z$ for $m=14$}
\label{fig:Z}
\end{figure}

\1Concatenate $X$ and $Z$ into an $m{\times} r$ matrix $XZ$ whose first $r$ rows are the rows of $X$ and whose remaining $r$ rows are the rows of $Z$.  Note that each row of $Z$ has weight $r-3$, while the row weights of $X$ are 1 and 2. Since $m\geq 12$, $r\geq 6$ and $r-3\not\in \{1,2\}$. Thus the rows weights of $Z$ are distinct from the row weights of $X$.   The columns of $XZ$ have weights $r-2$ and $r-1$, strictly less than $\frac{m}{2}=r$, and so $XZ$ is strictly low weight.  Thus by Lemma~\ref{lem:extrarows}, since $X$ is asymmetric, so is the concatenation $XZ$.

\1Suppose $m$ is odd.  Create the matrix $Z$ as above and add a row of zeros to create $Z'$.  Then $Z'$ has rows distinct rows of weight $r-3$ and $0$, which are distinct from the row weights of $X$. Again, the column weights of the concatenation, $r-1$ and $r-2$, are strictly less than $r=\lfloor \frac{m}{2}\rfloor$.  Again,  Lemma~\ref{lem:extrarows} provides the conclusion that $XZ$ is asymmetric.\end{proof}

\begin{lemma}\label{lem:halfn} \rm For $n\geq 12$, there exists an asymmetric $\lfloor \frac{n}{2}\rfloor{\times}n$ matrix. \end{lemma}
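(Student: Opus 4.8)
The plan is to build the desired matrix directly by adjoining \emph{columns} to the small seed matrix of Lemma~\ref{lem:mminusone}, mirroring exactly the way Lemma~\ref{lem:halfm} adjoined rows, but with the roles of rows and columns interchanged. Write $r=\lfloor \frac n2\rfloor$, so that $r\geq 6$ whenever $n\geq 12$. By Lemma~\ref{lem:mminusone} there is an asymmetric $r\times r$ matrix $A$; from its construction $A$ has exactly one column of weight $1$ and $r-1$ columns of weight $2$. I would then attach to $A$ a block $W$ of $s=n-r$ further length-$r$ columns, each of weight $3$, chosen to be pairwise non-isomorphic, producing an $r\times n$ matrix $AW$. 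Since the column weights of $W$ (all equal to $3$) differ from the column weights of $A$ (namely $1$ and $2$) as soon as $r\geq 6$, and $W$ has no pair of isomorphic columns, Lemma~\ref{lem:extracolumns} immediately gives that $AW$ is asymmetric. As $AW$ is $r\times n=\lfloor \frac n2\rfloor\times n$, it is the matrix we want.

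Two points need checking. First, that enough suitable columns exist: here $s=n-r=\lceil \frac n2\rceil$, which is $r$ when $n$ is even and $r+1$ when $n$ is odd, so $s\leq r+1$. Two weight-$3$ columns are isomorphic precisely when they are equal or complementary, so the number of pairwise non-isomorphic weight-$3$ columns of length $r$ is at least $\tfrac12\binom r3$, and a short computation gives $\tfrac12\binom r3\geq r+1$ for all $r\geq 6$; hence the required $s$ columns can always be chosen. A clean explicit choice avoiding complementarity uniformly is to take weight-$3$ columns all carrying a $1$ in one fixed coordinate: there are $\binom{r-1}{2}\geq r+1$ of these for $r\geq 6$, and no two are complementary. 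Second, I should confirm that $AW$ has no identical rows and no isomorphic columns, the facts the proof of Lemma~\ref{lem:extracolumns} invokes through Lemma~\ref{lem:iff}: no two rows of $A$ agree, and appending columns cannot merge rows, so $AW$ has distinct rows; the columns inside $A$ and inside $W$ are non-isomorphic, while a weight-$3$ column could be isomorphic to a weight-$1$ or weight-$2$ column of $A$ only by being its complement, forcing $r-3\in\{1,2\}$, impossible for $r\geq 6$.

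The main obstacle is precisely this non-isomorphism bookkeeping at the smallest value $r=6$ (that is, $n=12,13$). It is tempting to simply transpose the matrix produced by Lemma~\ref{lem:halfm} and appeal to Lemma~\ref{lem:Rules}\ref{lem:Rules:4}; but that matrix has rows of weight $r-3$, so its transpose is not strictly low weight and part~\ref{lem:Rules:4} does not apply. Worse, at $r=6$ those weight-$(r-3)=3$ rows contain complementary pairs, so the transpose genuinely acquires a nontrivial symmetry $(\mathrm{id},\varphi)$ and fails to be asymmetric. Forcing the adjoined columns to have weight $3$ and to be pairwise non-complementary is exactly what circumvents this failure, and it does so uniformly for every $n\geq 12$, which is why I prefer the direct construction above to any transpose of Lemma~\ref{lem:halfm}.
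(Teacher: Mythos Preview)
Your proof is correct and follows the same overall scheme as the paper's: start from the $r\times r$ seed of Lemma~\ref{lem:mminusone} and adjoin $n-r$ further columns via Lemma~\ref{lem:extracolumns}. The only difference is which columns are adjoined. The paper takes $Y=Z^T$, where $Z$ is the cyclic block from Lemma~\ref{lem:halfm}, so that the added columns have weight $s-3$ (together with one all-zero column when $n$ is odd); you instead add weight-$3$ columns sharing a fixed $1$.

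Your variant is in fact the more careful of the two. At $s=6$ (that is, $n\in\{12,13\}$) the cyclic rows of $Z$ occur in complementary pairs --- for instance $000111$ and $111000$ --- so the paper's assertion that ``the columns of $Y$ are nonisomorphic'' fails exactly there, and Lemma~\ref{lem:extracolumns} cannot be invoked as written. Your construction sidesteps this by forcing all added columns to contain a $1$ in a fixed coordinate, which rules out complementarity uniformly. For $s\geq 7$ the two choices are interchangeable, since weight-$(s-3)$ columns can no longer be complements of one another.
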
 

\begin{proof} Let $s=\lfloor \frac{n}{2} \rfloor$. The construction begins with the asymmetric $s{\times} s$ matrix $X$ from the proof of Lemma~\ref{lem:mminusone}.  Let $Z$ be the  $(n-s){\times} s$ matrix described in Lemma~\ref{lem:halfm} above.  Let $Y=Z^T$ and concatenate $X$ and $Y$.   Note that the columns of the $s\times (n-s)$ matrix $Y$ are nonisomorphic and all columns have weight different than the column weights of $X$. Then Lemma~\ref{lem:extracolumns} provides the conclusion that the $\lfloor \frac{n}{2} \rfloor \times n$ matrix $XY$ is asymmetric.\end{proof}


\section{The Complement Theorem}\label{sec:comp}

\1Before we go on to the rest of our constructions we need a bit more theory.  This  section summarizes the work of Richard Stong \cite{RS} on characteristic matrices of sets of vertices of $Q_n$.  His results make the conclusions of this paper possible.

\1Let $X$ be a binary $m{\times} n$ matrix with $n< 2^{m-1}$, $m< 2^n$, no pair of columns isomorphic,  and no two rows equal. Define $Y$ to be a  $m{\times} (2^{m-1}-n)$ binary matrix whose columns are representatives of isomorphism classes not represented as columns of $X$.  Define $Z$ to be a  $(2^n-m){\times} n$ binary matrix whose rows are  binary strings of length $n$ that are not rows of $X$.  We will see in the theorems below that either all of $X,Y,Z$ have symmetry or none has symmetry.

\begin{lemma}\label{lem:XYsymm} \rm Let $X$ be a $r {\times} s$ binary matrix with $s<2^{r-1}$, no identical rows, and no isomorphic columns.  Define $Y$ to be a $r{\times} (2^{r-1}-s)$ binary matrix whose columns are representatives of the isomorphism classes of columns that are not represented as columns of $X$.  Then $X$ has symmetry if and only if $Y$ has symmetry.\end{lemma}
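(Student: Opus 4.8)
The plan is to show that symmetries of $X$ and $Y$ correspond bijectively by exploiting the fact that, together, the columns of $X$ and $Y$ use up every isomorphism class of binary columns of length $r$ exactly once. The key observation is that there are exactly $2^{r-1}$ isomorphism classes of binary columns of length $r$ (each nonconstant column is isomorphic to exactly one other via the nontrivial $\psi$, and the two constant columns are isomorphic to each other), so the concatenation $XY$ is, up to isomorphism of columns, a matrix containing one representative of every column isomorphism class. I would first make this counting precise and record that $XY$ has no two isomorphic columns and no two equal rows, so that Lemma~\ref{lem:iff} applies to $XY$.

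First I would prove the easier direction. Suppose $X$ has a nontrivial symmetry $(\sigma, \varphi)$; I want to produce a symmetry of $Y$. The row permutation $\sigma$ acts on the length-$r$ rows without reference to the number of columns, so it acts on $Y$ as well. Applying $\sigma$ to the rows of $X$ permutes the column isomorphism classes present in $X$ among themselves (since $X_\sigma = X^\varphi$ has the same columns as $X$ up to permaut); because the full set of classes is partitioned into those in $X$ and those in $Y$, the permutation induced by $\sigma$ on all column classes must also preserve the set of classes appearing in $Y$. This lets me build a column permaut $\varphi_Y$ on $Y$ with $Y_\sigma = Y^{\varphi_Y}$. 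The symmetric argument (running $Y$ in place of $X$) gives the converse direction.

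The cleaner way to organize both directions at once is to work with the combined matrix $XY$ and show that a symmetry of $X$ extends to a symmetry of $XY$ and vice versa, then use the partition of column classes to split a symmetry of $XY$ back into a symmetry of each factor. Concretely, given a symmetry $(\sigma,\varphi)$ of $X$, define $(\sigma, \varphi')$ on $XY$ where $\varphi'$ agrees with $\varphi$ on the $X$-columns and is the induced class-permutation on the $Y$-columns; one checks $\sigma$ forces the same class-level action on all of $XY$, so $(XY)_\sigma = (XY)^{\varphi'}$, hence $Y_\sigma = Y^{\varphi'|_Y}$. Conversely a symmetry of $XY$ restricts: since $\sigma$ permutes the full set of column classes and the $X$-classes and $Y$-classes are disjoint and together exhaustive, $\sigma$ must send $X$-classes to $X$-classes, so the symmetry restricts to one of $X$.

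The step I expect to be the main obstacle is justifying that the row permutation $\sigma$, which a priori only witnesses a symmetry of $X$, actually maps the $Y$-column isomorphism classes to themselves — i.e. that $\sigma$ cannot send a class \emph{represented} in $X$ to one \emph{represented} in $Y$. This is where the exhaustiveness of the class partition is essential: since $X_\sigma$ and $X$ have the same multiset of column classes (as $(\sigma,\varphi)$ is a symmetry of $X$), the class-permutation that $\sigma$ induces on all $2^{r-1}$ classes fixes the subset of $X$-classes setwise, and therefore fixes its complement, the $Y$-classes, setwise as well. I would state this as the crux lemma and verify carefully that applying a row permutation to a single representative column produces a column in the same class as some column of $Y$, using that $Y$ contains exactly one representative of each missing class.
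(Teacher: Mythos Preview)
Your proposal is correct and follows essentially the same approach as the paper's proof: both apply the row permutation $\sigma$ from a symmetry of $X$ to the concatenation $XY$, observe that $X_\sigma = X^\varphi$ represents the same column isomorphism classes as $X$, and conclude from exhaustiveness of the class partition that $Y_\sigma$ represents the same classes as $Y$, yielding a permaut $\varphi'$ with $Y_\sigma = Y^{\varphi'}$. The paper additionally normalizes to the low-weight case at the outset (so that the columns of $XY$ are literally the $2^{r-1}$ low-weight columns rather than arbitrary class representatives) and invokes Lemma~\ref{lem:iff} on $X$ to ensure $\sigma$ is nontrivial, but these are cosmetic differences.
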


\begin{proof} By Lemma~\ref{lem:Obs}\ref{lem:ObsPermAuts:3}, we may assume that $X$ and $Y$ are low weight matrices for which the union of the $2^{r-1}$ columns of $X$ and $Y$ comprise all $2^{r-1}$ low weight columns of length $r$.  Concatenate $X$ and $Y$ to an $r{\times} 2^{r-1}$ binary matrix, $XY$, whose first $s$ columns are columns of $X$ and whose last $2^{r-1}-s$ columns are columns of $Y$. 

\1Suppose that $X$ has a symmetry $(\sigma, \varphi)$ with at least one of $\varphi, \sigma$ nontrivial.  Since $X$ has no identical rows and no isomorphic columns, by Lemma~\ref{lem:iff}, we can assume both $\sigma$ and $\varphi$ are nontrivial.  Since we can apply $\sigma$ to a matrix of $r$ rows regardless of the number of columns, we may apply $\sigma$ to the rows of $XY$ and get $(XY)_\sigma = X_\sigma Y_\sigma$. 

\1By Lemma~\ref{lem:Obs}\ref{lem:ObsPermAuts:2}, a row permutation preserves the property of two columns being or not being isomorphic.  Since each of the $2^{r-1}$ columns of $XY$ represents a distinct isomorphism class, and $\sigma$ does not change this distinctness, each of the $2^{r-1}$ columns of $X_\sigma Y_\sigma$ represents a distinct isomorphism class.  Further, since by assumption $X_\sigma = X^\varphi$, and by definition of $\varphi$, the columns of $X^\varphi$ represent the same isomorphism classes as the columns of $X$.  Thus,  the columns of $Y_\sigma$ represent the same isomorphism classes as the columns of $Y$.  Thus there is a permaut $\varphi'$ of $Y$ so that $Y^{\varphi'} = Y_\sigma$.   Thus $Y$ has symmetry.

\1Switching the roles of $X$ and $Y$ produces the same result. Thus $X$ has symmetry if and only if $Y$ has symmetry.\end{proof}

\1An entirely similar proof gives us the following.

\begin{lemma}\label{lem:ZXsymm} \rm  \rm Let $X$ be a $r {\times} s$ binary matrix with no identical rows and no isomorphic columns and $r<2^s$.  Define $Z$ to be a $(2^s-r){\times} s$  binary matrix whose rows are distinct and are not rows of $X$. Then $X$ has symmetry if and only if $Z$ has symmetry.\end{lemma}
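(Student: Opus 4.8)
The plan is to dualize the proof of Lemma~\ref{lem:XYsymm}, swapping the roles of rows and columns. Since the rows of $X$ together with the rows of $Z$ exhaust all $2^s$ distinct binary strings of length $s$, I would first form the vertical concatenation $XZ$, the $2^s{\times}s$ matrix whose first $r$ rows are the rows of $X$ and whose remaining $2^s-r$ rows are the rows of $Z$. The structural observation that drives everything is that a column permaut $\varphi$ can be applied to a matrix with $s$ columns regardless of how many rows it has (playing the role that ``a row permutation $\sigma$ acts regardless of the number of columns'' played in Lemma~\ref{lem:XYsymm}). Hence $(XZ)^\varphi = X^\varphi Z^\varphi$, the vertical concatenation of $X^\varphi$ and $Z^\varphi$.

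For the forward direction, suppose $X$ has a nontrivial symmetry $(\sigma,\varphi)$, so $X_\sigma = X^\varphi$. Because $X$ has distinct rows, $\varphi$ cannot be trivial (a trivial $\varphi$ would give $X = X^\varphi = X_\sigma$, forcing $\sigma$ to fix $X$ and hence to be trivial). Applying $\varphi$ to the columns of $XZ$, I would invoke Lemma~\ref{lem:Obs}\ref{lem:ObsPermAuts:2}: a column permaut preserves distinctness of rows. Since the $2^s$ rows of $XZ$ are pairwise distinct, so are the $2^s$ rows of $X^\varphi Z^\varphi$; being $2^s$ distinct strings of length $s$, they again comprise every binary string of length $s$. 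Now $X^\varphi = X_\sigma$ has exactly the same row set as $X$, so $Z^\varphi$ must consist precisely of the complementary rows, namely the rows of $Z$. Therefore some row permutation $\sigma'$ satisfies $Z_{\sigma'} = Z^\varphi$, and $(\sigma',\varphi)$ is a symmetry of $Z$ that is nontrivial because $\varphi\neq id$.

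The converse follows by switching the roles of $X$ and $Z$. The one place where the dualization is not purely formal, and the step I expect to be the main (if modest) obstacle, is checking that the hypotheses transfer: in Lemma~\ref{lem:XYsymm} the matrix $Y$ has no isomorphic columns by construction, whereas here $Z$ is only guaranteed to have distinct rows. Fortunately the entire argument rests on row-distinctness, which both $X$ and $Z$ enjoy, rather than on any column condition for $Z$; in particular, any nontrivial symmetry of $Z$ must have nontrivial $\varphi$, since $\varphi=id$ would force $Z = Z_\sigma$ and hence a repeated row of $Z$, contradicting the hypothesis. So I would be careful to verify that I never need $Z$'s columns to be non-isomorphic, and with that confirmed the converse runs exactly as above, yielding a nontrivial symmetry $(\sigma',\varphi)$ of $X$.
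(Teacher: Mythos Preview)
Your proposal is correct and follows essentially the same dualization of the proof of Lemma~\ref{lem:XYsymm} that the paper intends by ``An entirely similar proof.'' In particular, you correctly identify the one point where the dualization is not purely formal---that the argument only needs row-distinctness of $Z$ (which is given), not any column hypothesis---and your direct verification that a nontrivial symmetry must have $\varphi\neq id$ (via distinct rows forcing $\sigma=id$ when $\varphi=id$) is a clean substitute for invoking Lemma~\ref{lem:iff}.
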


\1Together Lemmas~\ref{lem:XYsymm} and~\ref{lem:ZXsymm} prove the Complement Theorem below. 

\1{\bf The Complement Theorem:} \cite{RS}  With $X,Y,Z$ as defined above, either each of $X,Y,Z$ has symmetry or none has symmetry.


\section{More Asymmetric Matrices}\label{sec:const}

By Lemma~\ref{lem:mminusone}, for every $m\geq 5$ (respectively $n\geq 4$) there is an asymmetric $m\times(m-1)$ (respectively $(n+1)\times n$) binary matrix. Now that we are assured of existence, we will find it useful to be able to refer to the smallest such values.  In particular, for  $n\geq 4$, denote by $\mu_n$  the fewest number of rows for which there is an asymmetric $\mu_n{\times} n$ matrix. Similarly, denote by $\nu_m$ the fewest number of columns for which there is an asymmetric $m{\times} \nu_m$ matrix.  It is obvious by definition that $\rho(Q_n)=\mu_n$.  However, there is a tremendous amount of symmetry in the statements and proofs and usage of $\mu_n$ and $\nu_m$, so the symmetry of the notation is natural.  To highlight this, we will use the notation $\mu_n$ until we reach our final conclusions. 

\1Lemmas~\ref{lem:halfm} and \ref{lem:halfn} give us the following corollary, which we will find useful later on.

\begin{corollary} \label{cor:halfs} \rm For $m\geq 12$, $\nu_m\leq \lfloor \frac{m}{2}\rfloor$; for $n\geq 12$, $\mu_n\leq \lfloor \frac{n}{2}\rfloor $.\end{corollary}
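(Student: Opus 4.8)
The plan is to derive both inequalities directly from the existence results already established for specific matrix sizes, using only the definitions of $\mu_n$ and $\nu_m$. Recall that $\nu_m$ is defined as the fewest number of columns for which there is an asymmetric $m{\times}\nu_m$ matrix, and $\mu_n$ as the fewest number of rows for which there is an asymmetric $\mu_n{\times}n$ matrix. The inequalities in the corollary are therefore statements about the \emph{existence} of asymmetric matrices of a certain shape, since producing an asymmetric matrix with $m$ rows and exactly $\lfloor \frac{m}{2}\rfloor$ columns immediately forces $\nu_m \leq \lfloor \frac{m}{2}\rfloor$ by minimality.

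First I would handle the bound $\nu_m \leq \lfloor \frac{m}{2}\rfloor$ for $m\geq 12$. Lemma~\ref{lem:halfm} guarantees that for every $m\geq 12$ there exists an asymmetric $m{\times}\lfloor \frac{m}{2}\rfloor$ binary matrix. By the definition of $\nu_m$ as the minimum number of columns over all asymmetric matrices with $m$ rows, the mere existence of such a matrix with $\lfloor \frac{m}{2}\rfloor$ columns certifies that the minimum is at most $\lfloor \frac{m}{2}\rfloor$, giving $\nu_m \leq \lfloor \frac{m}{2}\rfloor$. Symmetrically, for the bound $\mu_n \leq \lfloor \frac{n}{2}\rfloor$ for $n\geq 12$, I would invoke Lemma~\ref{lem:halfn}, which produces an asymmetric $\lfloor \frac{n}{2}\rfloor{\times}n$ matrix for every $n\geq 12$; by the definition of $\mu_n$ as the minimum number of rows over all asymmetric matrices with $n$ columns, this existence yields $\mu_n \leq \lfloor \frac{n}{2}\rfloor$.

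There is essentially no obstacle here: the corollary is an immediate repackaging of the two existence lemmas in terms of the minimizing parameters $\nu_m$ and $\mu_n$. The only thing to keep straight is the bookkeeping of which dimension is being minimized in each case, so that the correct lemma is matched to the correct parameter, and to confirm that the hypothesis $m\geq 12$ (respectively $n\geq 12$) of the corollary matches exactly the hypotheses of Lemmas~\ref{lem:halfm} and~\ref{lem:halfn}. Consequently I expect the proof to be a single short paragraph that cites each lemma and appeals to the definition of the relevant parameter.
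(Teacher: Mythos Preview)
Your proposal is correct and matches the paper's approach exactly: the paper states this corollary immediately after Lemmas~\ref{lem:halfm} and~\ref{lem:halfn} as a direct consequence of those two existence results together with the definitions of $\nu_m$ and $\mu_n$, with no further argument given.
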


\1The small cases for $m$ and $n$ resist the more general proofs that we use for larger $m$ and $n$.  The proofs of results for relatively small $m$ and $n$ are more detailed than that for Theorem~\ref{thm:fromnu}, but without being additionally enlightening.  So the proofs of Lemma~\ref{lem:smalln} and \ref{lem:smallm} and Theorems~\ref{thm:smallm} and \ref{thm:smalln} have been moved to Appendix~\ref{ap:small}. 

\begin{lemma}\label{lem:smalln}  \rm For $n\in[4,12]$, $\mu_n = 5$.\end{lemma}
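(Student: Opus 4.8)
The claim is equivalent to two matching bounds: an upper bound $\mu_n \le 5$, asserting the existence of an asymmetric $5\times n$ matrix for each $n\in[4,12]$, and a lower bound $\mu_n \ge 5$, asserting that no asymmetric $m\times n$ matrix exists with $m\le 4$. My plan leans heavily on the Complement Theorem as a labor-saving device: for a fixed number of rows $m$ it pairs the existence of an asymmetric $m\times n$ matrix with that of an asymmetric $m\times(2^{m-1}-n)$ matrix. With $m=5$ this identifies the parameter $n$ with $16-n$, and with $m=4$ it identifies $n$ with $8-n$, so in each direction roughly half of the cases come for free once the other half is settled.

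For the upper bound, Lemma~\ref{lem:mminusone} directly supplies asymmetric $5\times 4$ and $5\times 5$ matrices. I would then construct explicit asymmetric $5\times 6$, $5\times 7$, and $5\times 8$ matrices by starting from the $5\times 5$ seed of Lemma~\ref{lem:mminusone} and adjoining one, two, or three extra low-weight columns (of weight $2$, chosen non-isomorphic to those already present), verifying asymmetry either by the inductive column/row-fixing argument of that lemma or simply by inspection, since the matrices are tiny. Finally, because $16-n\in\{4,5,6,7\}$ for $n\in\{9,10,11,12\}$, applying Lemma~\ref{lem:XYsymm} to the matrices already obtained for $n=7,6,5,4$ yields asymmetric $5\times n$ matrices for $n=9,10,11,12$. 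This covers all of $[4,12]$, with $n=8$ being self-paired under the complement.

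For the lower bound I must rule out asymmetric $m\times n$ matrices with $m\le 4$. Lemma~\ref{lem:assump} eliminates $m\le 3$ at once (these would force $n<2^{m-1}\le 4$) and eliminates $m=4$ with $n\ge 8$ (forcing $n<2^{3}=8$), so only $4\times n$ for $n\in\{4,5,6,7\}$ survives. The Complement Theorem reduces $n=7$ to $4\times 1$ and $n=6$ to $4\times 2$, both trivially non-asymmetric: a single column cannot have four distinct rows, and the only $4\times 2$ matrix with distinct rows admits the column-swap symmetry matched by a row permutation. The case $n=5$ reduces to $4\times 3$, so $4\times 3$ and the self-paired $4\times 4$ remain as the only genuinely open cases.

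The main obstacle is disposing of $4\times 3$ and $4\times 4$, where no dimension inequality bites and the complement reductions bottom out. I would settle these by a finite case analysis: after normalizing (as permitted by Lemmas~\ref{lem:Obs} and~\ref{lem:assump}) to a low-weight matrix with distinct rows and pairwise non-isomorphic columns, the admissible column-weight profiles are few, and within each profile one can exhibit a nontrivial symmetry, typically a transposition of two equal-weight columns realized by a suitable row permutation. This bookkeeping, though routine, is the least automatic step and is precisely why the full argument is deferred to the appendix; everything else follows mechanically from the Complement Theorem together with Lemmas~\ref{lem:assump} and~\ref{lem:mminusone}.
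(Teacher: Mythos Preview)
Your proposal is correct and follows essentially the same blueprint as the paper's appendix proof: both directions lean on the Complement Theorem to halve the work, with explicit $5\times n$ constructions for small $n$ (then complements for $n\in[9,12]$) and a finite exhaustion at the self-paired case $4\times 4$. The one economy the paper gains over your outline is that the $4\times 3$ case needs no separate analysis---since $Q_1,Q_2,Q_3$ are not $2$-distinguishable, no $m\times n$ matrix with $n\le 3$ is asymmetric for \emph{any} $m$, and the Complement Theorem then disposes of $4\times n$ for $n\in[5,7]$ in one stroke, leaving only $n=4$ to check by hand.
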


\begin{lemma}\label{lem:smallm} \rm For $m\in[5,11]$, $\nu_m = 4$.\end{lemma}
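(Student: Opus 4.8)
The plan is to establish the two inequalities $\nu_m \le 4$ and $\nu_m \ge 4$ separately for each $m\in[5,11]$: the first amounts to exhibiting an asymmetric $m\times 4$ matrix, and the second to showing that no asymmetric $m\times n$ matrix exists for $n\le 3$. In both directions I would lean on the Complement Theorem (through Lemma~\ref{lem:ZXsymm}), which trades an $m\times n$ matrix for its row-complement of size $(2^n-m)\times n$ while preserving (a)symmetry. Since $2^4-m = 16-m$ and $2^3-m = 8-m$, this symmetry in $m$ roughly halves the number of cases I must handle by hand.

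For the upper bound I would use Lemma~\ref{lem:ZXsymm} with $s=4$ to reduce the three largest cases to smaller ones: an asymmetric $7\times 4$, $6\times 4$, or $5\times 4$ matrix has, by Lemma~\ref{lem:assump}, distinct rows, non-isomorphic columns, and fewer than $2^4$ rows, so its row-complement is an asymmetric $9\times 4$, $10\times 4$, or $11\times 4$ matrix respectively. The case $m=5$ comes for free, since $\mu_4=5$ (Lemma~\ref{lem:smalln} with $n=4$) already guarantees an asymmetric $5\times 4$ matrix. That leaves only $m\in\{6,7,8\}$, for which I would display explicit $m\times 4$ binary matrices and verify asymmetry using the weight-and-permutation machinery of Lemma~\ref{lem:Rules}, after first passing to a low weight representative via Lemma~\ref{lem:Obs}.

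For the lower bound I want to rule out asymmetric $m\times n$ matrices with $n\le 3$. The cases $n\in\{1,2\}$ are immediate from Lemma~\ref{lem:assump}, which requires $m<2^n$; this fails whenever $m\ge 5$ and $2^n\le 4$. For $n=3$ and $m\in[8,11]$ the same inequality $m<2^3=8$ fails, so no such matrix exists. The only genuinely remaining cases are $n=3$ with $m\in\{5,6,7\}$; here I again apply Lemma~\ref{lem:ZXsymm}, now with $s=3$, so that an asymmetric $m\times 3$ matrix would force an asymmetric $(8-m)\times 3$ matrix, i.e.\ a $3\times 3$, $2\times 3$, or $1\times 3$ matrix. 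The last two are excluded by Lemma~\ref{lem:assump} (they would need $3<2^{m-1}$ for $m\in\{1,2\}$), and for the $3\times 3$ case I would argue directly: after reducing to a strictly low weight matrix, its three columns must be three distinct members of $\{000,100,010,001\}$. A short case split finishes it—the three weight-one columns give a permutation matrix, which admits a nontrivial symmetry (swap two columns and the two rows carrying their ones), and including the zero column likewise lets one swap the two weight-one columns together with their nonzero rows—so every such matrix has a nontrivial symmetry.

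The main obstacle is the finite but unenlightening bookkeeping in the two irreducible pieces: producing correct explicit asymmetric $m\times 4$ matrices for $m\in\{6,7,8\}$ and confirming their asymmetry, and carrying out the $3\times 3$ enumeration cleanly. The conceptual content—the reduction in $m$ via the Complement Theorem, together with the appeals to Lemma~\ref{lem:assump} and to $\mu_4=5$—is routine; the only real risk is transcribing a displayed matrix incorrectly or overlooking a symmetry in the small-case checks, which is presumably exactly why the paper defers these verifications to the appendix.
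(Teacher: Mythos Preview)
Your proposal is correct and follows essentially the same scheme as the paper: explicit asymmetric $m\times 4$ matrices for the smaller values of $m$, the Complement Theorem (with $s=4$) to cover $m\in\{9,10,11\}$ from $m\in\{7,6,5\}$, and the observation that $n\le 3$ admits no asymmetric $m\times n$ matrix. The only substantive difference is in how you justify the lower bound $\nu_m\ge 4$: the paper simply cites (via Example~\ref{ex:5by4}) the external fact that $Q_1,Q_2,Q_3$ are not $2$-distinguishable, which by Theorem~\ref{thm:distasymm} immediately rules out asymmetric $m\times n$ matrices for all $n\le 3$ and all $m$; you instead derive this from scratch using Lemma~\ref{lem:assump} and a direct case analysis of $3\times 3$ matrices. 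Your route is more self-contained but longer; the paper's is a one-line citation.
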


\begin{theorem} \label{thm:smallm} \rm  For $m \in [5,11]$ and $n\in [4,2^{m-1}-4]$, there exists an asymmetric $m{\times}n$ matrix. \end{theorem}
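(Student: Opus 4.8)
\section*{Proof proposal for Theorem~\ref{thm:smallm}}

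The plan is to fix $m\in[5,11]$ and produce an asymmetric $m{\times}n$ matrix for every $n$ in the stated range, using the Complement Theorem to cut the work in half. The case $m=5$ is immediate: Lemma~\ref{lem:smalln} gives $\mu_n=5$ for $n\in[4,12]$, i.e. an asymmetric $5{\times}n$ matrix exists for every $n\in[4,2^{4}-4]$. So assume $6\le m\le 11$. Since an asymmetric matrix has no equal rows and no isomorphic columns (Lemma~\ref{lem:assump}), and since here $m<2^{n}$ and $n<2^{m-1}$, the Complement Theorem applies with $m$ fixed: an asymmetric $m{\times}n$ matrix exists if and only if an asymmetric $m{\times}(2^{m-1}-n)$ matrix exists. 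As $n$ runs over $[4,2^{m-2}]$ the value $2^{m-1}-n$ runs over $[2^{m-2},2^{m-1}-4]$, so it suffices to construct asymmetric $m{\times}n$ matrices for $n\in[4,2^{m-2}]$.

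For the upper part $n\in[m,2^{m-2}]$ I would build a chain by appending fresh column-weight classes to the $m{\times}m$ staircase matrix $X$ of Lemma~\ref{lem:mminusone}, whose columns use only weights $1$ and $2$. Starting from $X$, repeatedly append (via Lemma~\ref{lem:extracolumns}) columns of a single new low weight $w\in\{0,3,4,\dots,\lfloor m/2\rfloor\}$; for $w<m/2$ all distinct weight-$w$ columns are pairwise non-isomorphic, and for even $m$ and $w=m/2$ one keeps a single representative from each flip-pair. Appending $t$ such columns, $0\le t\le c_w$ (where $c_w=\binom{m}{w}$, or $\tfrac12\binom{m}{m/2}$ when $m$ is even and $w=m/2$), yields an asymmetric matrix with $m+t$ columns by Lemma~\ref{lem:extracolumns}, since the running base is itself asymmetric and the new weight is absent from it. Finishing one class and starting the next makes the attained column counts a single contiguous interval $[m,\,m+\sum_w c_w]$, and a direct binomial estimate gives $m+\sum_w c_w\ge 2^{m-2}$ for each $m\in[6,11]$, so this interval contains $[m,2^{m-2}]$.

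The remaining lower part is $n\in[4,m-1]$, where $n=4$ is Lemma~\ref{lem:smallm} and $n=m-1$ is Lemma~\ref{lem:mminusone}. For $n\in[5,m-2]$ I would start from the asymmetric $n{\times}n$ staircase of Lemma~\ref{lem:mminusone} (row weights in $\{1,2\}$, column weights at most $2$) and append $m-n$ rows via Lemma~\ref{lem:extrarows}: take one zero row together with $m-n-1$ distinct rows of weight $3$, which are pairwise distinct and carry weights disjoint from $\{1,2\}$, placing their ones so that no column of the $m{\times}n$ result exceeds weight $\lfloor m/2\rfloor$ (in particular none has weight $m/2$). Comparing the per-column budget with the $3(m-n-1)$ ones to be distributed shows this is feasible for every such pair except the single extreme aspect ratio $(m,n)=(10,5)$; for that pair I would instead transpose a strictly-low-weight asymmetric $5{\times}10$ matrix (whose rows also have weight below $5$) coming from the construction in Lemma~\ref{lem:smalln} and invoke Lemma~\ref{lem:Rules}\ref{lem:Rules:4}, or simply exhibit one $10{\times}5$ matrix explicitly.

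The main obstacle is the lower part: checking that the fresh-weight rows actually fit inside the column-weight budget (a routine Gale--Ryser-type feasibility check, tight only at $(8,5)$) and isolating the one genuinely exceptional pair $(10,5)$, which the appending method cannot reach without creating a forbidden middle-weight or high-weight column. Everything else is mechanical: the contiguity of the appended-column chain, the binomial bound $m+\sum_w c_w\ge 2^{m-2}$, and the reflection supplied by the Complement Theorem.
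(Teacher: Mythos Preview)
Your argument is correct and shares the paper's overall scaffold---reduce to $n\in[4,2^{m-2}]$ via the Complement Theorem, anchor the top of that range on the $m{\times}m$ staircase from Lemma~\ref{lem:mminusone}, and extend by appending columns of fresh weight through Lemma~\ref{lem:extracolumns}---but the two proofs diverge in the details.  For the interval $[m,2^{m-2}]$ the paper appends only weight-$3$ columns, whereas you cycle through all unused low weights $\{0,3,4,\dots,\lfloor m/2\rfloor\}$; your version is the one that actually reaches $2^{m-2}$ uniformly, since $\binom{m}{3}$ alone falls short for $m\ge 9$.  For the small range $n\in[4,m-1]$ the paper works column-wise: it takes the explicit $m{\times}4$ matrices of Lemma~\ref{lem:smallm} (or their row-complements) and appends a few weight-$0$ or weight-$1$ columns via Lemma~\ref{lem:extracolumns}, treating $m=6$, $m=7$, $m=8$, and $m\in[9,11]$ separately.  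You instead work row-wise, starting from the $n{\times}n$ staircase and appending one zero row plus several weight-$3$ rows via Lemma~\ref{lem:extrarows}; this is more uniform but forces the Gale--Ryser feasibility check and the single genuine exception $(m,n)=(10,5)$, which the paper's column-appending avoids entirely.  Either route is valid; yours is cleaner for the large-$n$ half, while the paper's ad~hoc handling of the small-$n$ half sidesteps the degree-sequence bookkeeping and the exceptional case.
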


\begin{theorem} \label{thm:smalln} \rm  For $n \in [4,12]$ and $m\in [5,2^n-5]$, there exists an asymmetric $m{\times}n$ matrix. \end{theorem}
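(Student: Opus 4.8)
The plan is to combine the Complement Theorem with repeated row-extensions via Lemma~\ref{lem:extrarows}.

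First, by Lemma~\ref{lem:ZXsymm}, an asymmetric $m\times n$ matrix exists if and only if an asymmetric $(2^n-m)\times n$ matrix exists: the rows absent from an asymmetric $X$ assemble into the complementary matrix $Z$, whose defining hypotheses (no identical rows, no isomorphic columns, $m<2^n$) hold automatically for an asymmetric matrix by Lemma~\ref{lem:assump}, and $X$ is asymmetric exactly when $Z$ is. Since $m\mapsto 2^n-m$ maps $[5,2^n-5]$ onto itself with fixed point $2^{n-1}$, it suffices to produce asymmetric $m\times n$ matrices for $m\in[5,2^{n-1}]$; complementation then supplies every $m\in[2^{n-1},2^n-5]$.

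Second, the low end of this range is already covered: Theorem~\ref{thm:smallm} yields an asymmetric $m\times n$ matrix for all $m\in[5,11]$ and $n\in[4,12]$, since for each such pair one checks $n\le 12\le 2^{m-1}-4$. In particular the case $n=4$ is settled outright, as there $2^{n-1}=8\le 11$. For $5\le n\le 12$ and $12\le m\le 2^{n-1}$ I would build the matrix by starting from a conveniently chosen small asymmetric base $X_0$ — for instance the $5\times n$ matrix of Lemma~\ref{lem:smalln} — and appending a set $Z$ of $m-|X_0|$ distinct rows, every one of whose weights avoids the (at most five) row weights occurring in $X_0$. Lemma~\ref{lem:extrarows} then certifies that $X_0Z$ is asymmetric, provided only that $X_0Z$ has no column of weight $\tfrac{m}{2}$. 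Provided the base weights are kept away from $n/2$, the number of length-$n$ strings whose weight avoids the used classes comfortably exceeds $2^{n-1}$ for each $n\le 12$, which is more than enough room to reach $m=2^{n-1}$.

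The main obstacle is the lone hypothesis of Lemma~\ref{lem:extrarows} that I have not yet discharged: that the completed $m\times n$ matrix carry no column of weight $\tfrac{m}{2}$. For odd $m$ this is vacuous, so the delicate steps are exactly those with $m$ even, and they grow tightest near the middle $m=2^{n-1}$, where column weights cluster around $2^{n-2}$. Here I would exploit the freedom in choosing which fresh-weight strings to place in $Z$: among the many eligible sets of the required size, only those driving some column to exactly $\tfrac{m}{2}$ are forbidden, and a counting argument should exhibit an admissible choice whenever $m<2^{n-1}$, with the single self-paired value $m=2^{n-1}$ handled by a dedicated construction (arranging the final batch of rows so that the $n$ column weights straddle $2^{n-2}$). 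Verifying, alongside this, that a base with suitably off-center row weights exists and that the column bookkeeping goes through uniformly over the nine values $n\in[4,12]$ is the fiddly, case-sensitive part, exactly as the remark preceding the statement anticipates; but it introduces no idea beyond those above.
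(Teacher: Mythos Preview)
Your overall architecture matches the paper's: the paper simply declares the proof ``entirely similar'' to that of Theorem~\ref{thm:smallm}, which amounts to the same three moves you describe --- reduce to $m\le 2^{n-1}$ via the Complement Theorem, cover small $m$ directly, and fill in the rest by appending rows of unused weight to a small asymmetric seed. Your invocation of Theorem~\ref{thm:smallm} to cover all $m\in[5,11]$ for every $n\in[4,12]$ is a clean shortcut and is valid, since $12\le 2^{m-1}-4$ for each such $m$.

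The genuine gap is exactly the one you flag but do not close: the extra hypothesis of Lemma~\ref{lem:extrarows} that the concatenated matrix have no column of weight $\tfrac{m}{2}$. This hypothesis has no analogue in Lemma~\ref{lem:extracolumns}, so it is precisely the point at which ``entirely similar'' stops being automatic. Your remark that ``a counting argument should exhibit an admissible choice'' and that $m=2^{n-1}$ can be ``handled by a dedicated construction'' is not a proof; for even $m$ near $2^{n-1}$ the column weights of any $m\times n$ matrix with distinct rows are forced to cluster near $m/2$, and showing that one can always steer every column strictly away from $m/2$ requires real bookkeeping (or a different device, such as choosing the added rows in complementary pairs so that each column gains a controlled amount). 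Note too that the phrase ``provided the base weights are kept away from $n/2$'' addresses the supply of eligible \emph{rows}, not the column-weight obstruction; these are separate constraints. Until the half-weight condition is discharged for every even $m$ in $[12,2^{n-1}]$ and every $n\in[5,12]$, the proposal remains a correct outline with its hardest step still owed.
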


\1Together the lemmas and theorems listed above yield the following statements which preview the statements for more general $m$ and $n$ given in Theorem~\ref{thm:fromnu}.\vskip-.15in

$$m \in [5,11]\ \& \ n\in [\nu_m,2^{m-1}-\nu_m] \Longrightarrow  \exists \text{ an asymmetric } m{\times}n \text{ matrix.}$$ \vskip-.25in
  
$$n \in [4,12]\ \& \ m\in [\mu_n,2^n-\mu_n] \Longrightarrow  \exists \text{ an asymmetric } m{\times}n \text{ matrix.}$$ 

\begin{theorem} \label{thm:fromnu} \rm For $m\geq 12$ and $n\in [\nu_m, 2^{m-1}-\nu_m]$, there exists an asymmetric $m{\times} n$ matrix.  \end{theorem}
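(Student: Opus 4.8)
The plan is to first collapse the problem to the lower half of the interval using the Complement Theorem. Since an asymmetric matrix has no equal rows and no isomorphic columns (Lemma~\ref{lem:assump}) and $n \le 2^{m-2} < 2^{m-1}$, Lemma~\ref{lem:XYsymm} applies: an asymmetric $m \times n$ matrix exists if and only if an asymmetric $m \times (2^{m-1} - n)$ matrix does. As $n$ runs over $[\nu_m, 2^{m-2}]$, its complement $2^{m-1} - n$ runs over $[2^{m-2}, 2^{m-1} - \nu_m]$, so it suffices to construct asymmetric $m \times n$ matrices for $n \in [\nu_m, 2^{m-2}]$.

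For the wide part of this range, $n \in [m-1, 2^{m-2}]$, I would build up by appending columns. Start from the $m \times (m-1)$ staircase of Lemma~\ref{lem:mminusone}, whose columns have only weights $1$ and $2$. Reserving those two (small) weight classes, repeatedly apply Lemma~\ref{lem:extracolumns}: append the complete weight-$w$ class for a fresh strictly-low weight $w \in \{0,3,4,\ldots,\lceil m/2\rceil-1\}$, one weight at a time, finishing with a partial class of the current frontier weight. Since distinct columns of a fixed weight $w < \frac m2$ are pairwise non-isomorphic, any number of them may be appended, so the realizable column counts form a contiguous interval starting at $m-1$. A binomial count --- using that the strictly-low-weight columns number at least $2^{m-1} - \tfrac12\binom{m}{\lfloor m/2\rfloor}$ and subtracting the reserved $\binom{m}{1}+\binom{m}{2}$ --- shows this interval reaches past $2^{m-2}$ for every $m \ge 12$, so every $n \in [m-1, 2^{m-2}]$ is hit.

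For the tall part, $n \in [\nu_m, m-2]$, appending columns is no longer available: here a matrix has few columns but many rows, and asymmetry forces middle weights that block the very weight classes used above. Instead I would grow the row count by the transpose of the same idea. Fix such an $n$ and note $m < 2^{\nu_m} \le 2^{n}$, so the necessary bound of Lemma~\ref{lem:assump} holds; begin from the $(n+1)\times n$ staircase of Lemma~\ref{lem:mminusone} (row weights $1,2$) and use Lemma~\ref{lem:extrarows} to append complete and partial row-weight classes of fresh weights, increasing the row count contiguously from $n+1$. When the target $m$ lies past the midpoint $2^{n-1}$ I would instead reflect via the row Complement Theorem (Lemma~\ref{lem:ZXsymm}), building the short matrix with $2^n - m$ rows and complementing, while the small residual cases $n \le 12$ are supplied directly by Theorem~\ref{thm:smalln}.

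The hard part is the tall part near the bottom of the interval. The row telescoping from the staircase reaches only about $2^{n-1} - \tfrac12\binom{n}{\lfloor n/2\rfloor}$ rows, and its row-complement image leaves an uncovered window of width $\approx \binom{n}{\lfloor n/2\rfloor}$ centered at the midpoint row count $2^{n-1}$; the target $m$ lands in this window precisely when $2^{n-1}\approx m$, that is when $n \approx \log_2 m \approx \nu_m$. Thus the genuinely delicate cases sit exactly at the lower boundary $n \approx \nu_m$ --- unsurprisingly, since that boundary is what the definition of $\nu_m$ and the final recursive formula are pinning down. Closing this window (together with verifying throughout the fresh-weight and no-column-of-weight-$\tfrac{k+\ell}{2}$ hypotheses of Lemmas~\ref{lem:extracolumns} and~\ref{lem:extrarows}, and the contiguity count for all $m\ge 12$) is where the real work lies, and where I expect to need the full strength of the Complement Theorem together with a careful induction on the smaller dimension, using the small-case results of Section~\ref{sec:const} as the base.
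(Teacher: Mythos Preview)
Your reduction to $[\nu_m, 2^{m-2}]$ via the Complement Theorem and your treatment of $[m-1, 2^{m-2}]$ by appending fresh weight classes to the staircase are exactly what the paper does. The gap is in the ``tall part'' $[\nu_m, m-2]$: you switch to a row-growing argument, correctly diagnose that it leaves an unclosed window near $n \approx \nu_m$, and then leave that window open, promising an induction that you do not actually carry out.

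The paper avoids this difficulty entirely by staying with column-appending, just from \emph{different seeds}. Your premise that ``appending columns is no longer available'' below $n=m-1$ is the wrong turn: what fails is only appending to the staircase, not appending per se. For $n\in[\nu_m, \lfloor m/2\rfloor - 1]$ the paper takes as seed an asymmetric $m\times \nu_m$ matrix --- which exists \emph{by the very definition of $\nu_m$} --- and observes that such a matrix can exhibit at most $\nu_m$ distinct column weights, leaving at least $\lfloor m/2\rfloor - \nu_m$ unused low weights; one column of each unused weight, appended via Lemma~\ref{lem:extracolumns}, bridges up to $\lfloor m/2\rfloor - 1$. For $n\in[\lfloor m/2\rfloor, m-2]$ the seed is the $m\times \lfloor m/2\rfloor$ matrix of Lemma~\ref{lem:halfm}, whose columns use only the two weights $r-1$ and $r-2$, again leaving enough unused low weights to reach $m-2$. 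No row-growing, no induction on the smaller dimension, no uncovered window: the definitional existence of the $m\times\nu_m$ asymmetric matrix is precisely the idea you are missing, and it dissolves exactly the boundary case you flagged as hard.
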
 

\begin{proof} Let $r=\lfloor \frac{m}{2} \rfloor$.  We will prove this theorem in four steps.  First we will show it is true in the integer interval $\left[\nu_m,r-1 \right]$, then in $\left[r, m-2\right]$, then in $[m-1,2^{m-2}]$.  Finally, we will use the Complement Theorem to obtain the result for $[2^{m-2},2^{m-1}-\nu_m]$. 

\1$\bullet${\bf \boldmath $\left[\nu_m,r-1 \right]$:} Recall that for $m\geq 12$, $\nu_m \leq r$ by Corollary~\ref{cor:halfs}. However, we may assume that $\nu_m<r$.  Otherwise this interval is empty and unnecessary for this proof.  Start with an asymmetric $m{\times} \nu_m$ matrix $X$, whose existence is guaranteed by the definition of $\nu_m$.  Assume $X$ is low weight. We wish to add $j\in [1,r-\nu_m-1]$ columns to $X$ so that the resulting $m{\times} (\nu_m+j)$ matrix is asymmetric.   By Lemma~\ref{lem:extracolumns}, it is sufficient to find a low weight $m\times j$ binary matrix, $Y$,  with no isomorphic columns whose column weights are distinct from the column weights of $X$ and concatenate the result with $X$.

\1By its dimensions, the number of distinct column weights in $X$ is at most $\nu_m$. Further, the number of possible nontrivial column weights in a low weight matrix is $r$.  Thus there are at least $r-\nu_m$  low weights that do not occur for columns in $X$.   Thus for each $j\in[1, r - \nu_m-1]$ we can find $j$ nonisomorphic columns with weights different from the weights of the columns of $X$ to create a low weight matrix $Y$.  By Lemma~\ref{lem:extracolumns}, $XY$ is asymmetric. 

\1$\bullet${\bf \boldmath $\left[r,m-2 \right]$:} Using the proof of Lemma~\ref{lem:halfm} we can construct an asymmetric $m\times r$ matrix $X$ with column weights $r-1$ and $r-2$.  We wish to add $j\in [1,m-r-2]$ columns to $X$ so that the resulting $m{\times} (r+j)$ matrix is asymmetric.   By Lemma~\ref{lem:extracolumns} it is sufficient to find a low weight $m\times j$ binary matrix with no isomorphic columns whose column weights are distinct from the column weights of $X$ and concatenate the result with $X$.  Note that if $m$ is even $m-r-2 = r-2$ while if $m$ is odd $m-r-2 = r-1$. 

\1Note that since the columns of $X$ use only two low weights, there are $r-2$  distinct low weights available for a $m\times j$ matrix $Y$. Since $m\geq 12, r\geq 6$ and so $r-2>1$.  In particular there is some unused nontrivial low weight.  Thus with $r-2$ unused low weights we can find up to $m-r-2 \in \{r-2,r-1\}$ nonisomorphic columns whose weights are distinct from the column weights of $X$. Thus we may choose $j\in [1, m-r-2]$ nonisomorphic low weight columns whose weights are not $r-2$ or $r-3$. The result is a low weight matrix $Y$,  and  by Lemma~\ref{lem:extracolumns}, $XY$ is asymmetric. 

\1$\bullet${\bf \boldmath $\left[m-1,2^{m-2} \right]$:} Begin with the $m{\times} (m-1)$ asymmetric binary matrix referred to in the proof of Lemma~\ref{lem:mminusone}.  Recall that it has columns of weight 1 and 2.  For each $j\in [1,2^{m-2}-m+1]$ we wish to find $j$ nonisomorphic low weight columns with weights distinct from the column weights in $X$.  Thus we only need toshow that there exist at least $2^{m-2}-m+1$ non-isomorphic low weight columns of length $m$.

\1There are $\sum_{i=3}^r {m\choose i}$ nonisomorphic columns of weight $w\in [3,r]$.  Note that if $m$ is odd $\sum_{i=0}^r {m \choose i} = 2^{m-1}$, while if $m$ is even $\sum_{i=0}^r {m \choose i} = 2^{m-1}+\frac{1}{2} {m\choose r}$.  In particular, for all $m$, $\sum_{i=0}^r {m \choose i} \geq 2^{m-1}$. Thus $\sum_{i=3}^r {m\choose i} - {m\choose 0} - {m\choose 1} - {m\choose 2} \geq 2^{m-1} - {m\choose 0} - {m\choose 1} - {m\choose 2}$. Thus it is sufficient to show that $2^{m-1} - {m\choose 0} - {m\choose 1} - {m\choose 2}\geq 2^{m-2} -m+1$.    Straightforward algebra shows that this inequality holds if and only if $2^{m-1} \geq m^2-m+4$ which is true for $m\geq 6$. 

\1Thus for each $j\in [1,2^{m-2}-m+1]$ we may choose $j$ nonisomorphic low weight columns of weights distinct from the column weights of $X$, and create an $m\times j$ matrix $Y$.  By Lemma~\ref{lem:extracolumns}, concatenating $X$ with $Y$ produces an asymmetric matrix of the desired dimensions.

\1$\bullet${\bf \boldmath $\left[2^{m-2}, 2^{m-1}-\nu_m \right]$:}  For each $n\in \left[2^{m-2}, 2^{m-1}-\nu_m \right]$, we see that $n'=2^{m-1}-n \in [\nu_m, 2^{m-2}]$.  By the work above, there is an asymmetric $m{\times}n'$ matrix, and then by the Complement Theorem this guarantees the existence of an asymmetric $m{\times}n$ matrix. \end{proof}

\1An entirely similar proof shows that 

\begin{theorem}\label{thm:frommu} \rm For $n\geq 12$ and $m\in [\mu_n, 2^n-\mu_n]$ there exists an asymmetric $m\times n$ matrix.\end{theorem}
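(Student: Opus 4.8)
The plan is to run the four-step argument of Theorem~\ref{thm:fromnu} with the roles of rows and columns interchanged. Everywhere that proof appends columns through Lemma~\ref{lem:extracolumns} and reflects through the column half of the Complement Theorem (Lemma~\ref{lem:XYsymm}), I would instead append rows through Lemma~\ref{lem:extrarows} and reflect through its row half (Lemma~\ref{lem:ZXsymm}). Setting $s=\lfloor \frac{n}{2}\rfloor$, I would partition the target range $[\mu_n,2^n-\mu_n]$ into $[\mu_n,s-1]$, $[s,n-2]$, $[n-1,2^{n-1}]$, and $[2^{n-1},2^n-\mu_n]$, and handle them in that order. The reflection $m\mapsto 2^n-m$ furnished by Lemma~\ref{lem:ZXsymm} is symmetric about $2^{n-1}$, so the fourth interval is free once the first three are done: for $m\in[2^{n-1},2^n-\mu_n]$ the value $m'=2^n-m$ lies in $[\mu_n,2^{n-1}]$, an asymmetric $m'\times n$ matrix has already been built, and Lemma~\ref{lem:ZXsymm} turns it into an asymmetric $m\times n$ matrix.

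For the first three intervals I would start from seeds with $n$ columns analogous to those of Theorem~\ref{thm:fromnu}: an asymmetric $\mu_n\times n$ matrix (guaranteed by the definition of $\mu_n$) for $[\mu_n,s-1]$; the asymmetric $s\times n$ matrix of Lemma~\ref{lem:halfn} for $[s,n-2]$; and, for $[n-1,2^{n-1}]$, the $(n-1)\times n$ transpose of the $n\times(n-1)$ seed of Lemma~\ref{lem:mminusone}, which is again asymmetric by Lemma~\ref{lem:Rules}\ref{lem:Rules:4} since that seed is strictly low weight with all row and column weights at most $2$. To reach a prescribed number of rows I would append a set $Z$ of distinct rows whose weights miss the few row weights occurring in the seed; a seed with $k$ rows uses at most $k$ of the $n+1$ possible weights, so in the first two intervals unused weight classes are always available, while in the third the seed uses only weights $1$ and $2$ and the supply of admissible distinct rows, $2^n-n-\binom{n}{2}$, dwarfs the at most $2^{n-1}-(n-1)$ rows that ever need to be added.

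The step I expect to be the genuine obstacle is the hypothesis of Lemma~\ref{lem:extrarows} that has no counterpart in Lemma~\ref{lem:extracolumns}: the enlarged matrix $XZ$ must have no column of weight exactly $\frac{k+\ell}{2}$. Appending columns in Theorem~\ref{thm:fromnu} is local, leaving the old columns untouched, whereas appending rows perturbs every column at once, and near the top of the third interval the column length grows to $2^{n-1}$, forcing column weights close to $2^{n-2}$, so I cannot simply keep the matrix low weight. My plan is to dispatch the condition as follows. When the target row count $m$ is odd, $\frac{m}{2}$ is not an integer and the condition is vacuous. When $m$ is even I would assemble $Z$ largely out of complete weight classes, using the fact that including every weight-$w$ row adds exactly $\binom{n-1}{w-1}$ ones to each column and therefore keeps all column weights equal up to the bounded perturbation coming from the seed; a single, carefully chosen partial weight class then both hits the remaining target counts and nudges every column weight off $\frac{m}{2}$. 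Checking that these balanced selections simultaneously meet the distinctness and distinct-weight requirements of Lemma~\ref{lem:extrarows} and never leave a column at weight exactly $\frac{m}{2}$ is the one place where the argument genuinely exceeds a transcription of the earlier proof, and is where I would concentrate the effort.
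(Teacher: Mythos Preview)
Your four-step outline---swapping Lemma~\ref{lem:extracolumns} for Lemma~\ref{lem:extrarows} and Lemma~\ref{lem:XYsymm} for Lemma~\ref{lem:ZXsymm}, with seeds drawn from the definition of $\mu_n$, from Lemma~\ref{lem:halfn}, and from the transpose of the $n\times(n-1)$ matrix of Lemma~\ref{lem:mminusone}---is exactly what the paper intends: its entire proof of Theorem~\ref{thm:frommu} is the clause ``An entirely similar proof shows that \ldots''. Structurally you match the source, and in fact you have gone further than it does.

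Where you go further is in flagging the extra hypothesis of Lemma~\ref{lem:extrarows}, that the concatenation have no column of weight exactly half its height. You are right that this obstructs a purely mechanical transcription: $\varphi$ preserves column weights unconditionally (Lemma~\ref{lem:Rules}\ref{lem:Rules:1}), but $\sigma$ preserves row weights only under a strict-low-weight hypothesis (Lemma~\ref{lem:Rules}\ref{lem:Rules:3}), and near the top of the third interval one cannot keep the matrix strictly low weight. The paper does not address this; its ``entirely similar'' sweeps the asymmetry under the rug. Your complete-weight-class device is a sound idea and should succeed---adding an entire weight-$w$ class raises every column weight by the same amount $\binom{n-1}{w-1}$, so only the one partial class and the few seed columns need individual attention---but you are also right that the residual bookkeeping (hitting the target row count while keeping all $n$ column weights off $\tfrac{m}{2}$, and not reusing a seed row weight) is genuine work that remains to be written out. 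Note that the same hypothesis of Lemma~\ref{lem:extrarows} must also be checked in your first two intervals; there $m<n$ and the verification is far easier, but it is not literally vacuous as your summary suggests.
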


\1The existence of the asymmetric $m{\times} n$ matrices for 1) $m\geq 5$, $n\in[\nu_m, 2^{m-1}-\nu_m]$ and 2) $n\geq 4$, $m\in [\mu_n, 2^n-\mu_n]$ provide upper bounds on $\mu_n$ and $\nu_m$ respectively. That is, we have proved the following.

\begin{theorem}\label{thm:upperbd} \rm Let $m\geq 5, n\geq 4$.  If $n\in[\nu_m, 2^{m-1}-\nu_m]$ then $\mu_n\leq m$.\end{theorem}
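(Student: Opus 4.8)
The plan is to recognize that the conclusion $\mu_n \leq m$ is, by the very definition of $\mu_n$, nothing more than the assertion that an asymmetric $m\times n$ matrix exists: since $\mu_n$ is the minimum number of rows over all asymmetric matrices with $n$ columns, exhibiting \emph{any} asymmetric $m\times n$ matrix immediately forces $\mu_n \leq m$. So the theorem is a packaging of the existence results already established, and the work reduces to checking that those results collectively cover every pair $(m,n)$ with $m\geq 5$, $n\geq 4$, and $n\in[\nu_m, 2^{m-1}-\nu_m]$.

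To do this I would split on the size of $m$. For $m\geq 12$, Theorem~\ref{thm:fromnu} directly guarantees an asymmetric $m\times n$ matrix for every $n$ in the interval $[\nu_m, 2^{m-1}-\nu_m]$, which is precisely the hypothesized range. For $m\in[5,11]$, Lemma~\ref{lem:smallm} gives $\nu_m = 4$, so the hypothesized interval becomes $[4, 2^{m-1}-4]$, and Theorem~\ref{thm:smallm} (equivalently, the preview statement for small $m$) supplies an asymmetric $m\times n$ matrix throughout exactly this interval. Thus in both regimes the hypothesis places $n$ inside a range for which an asymmetric $m\times n$ matrix has already been constructed.

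Combining the two cases, whenever $m\geq 5$ and $n\in[\nu_m, 2^{m-1}-\nu_m]$ there is an asymmetric $m\times n$ matrix, and hence $\mu_n\leq m$ by minimality. The one point that needs genuine care, and the only place the argument could go astray, is the boundary matching: one must confirm that the interval endpoints in the hypothesis coincide with the endpoints covered by Theorem~\ref{thm:fromnu} and Theorem~\ref{thm:smallm}, so that no admissible value of $n$ falls outside the constructed families. Since all of the difficult construction, including the Complement Theorem step that handles the upper half $[2^{m-2}, 2^{m-1}-\nu_m]$ of the interval, was carried out in those earlier results, I expect no new combinatorial work here, only the routine verification that the stated range of $m$ is exhausted by the two cases and that the definition of $\mu_n$ converts existence into the desired inequality.
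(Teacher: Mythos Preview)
Your proposal is correct and matches the paper's approach exactly: the paper also treats Theorem~\ref{thm:upperbd} as an immediate repackaging of the existence results, noting that for $m\in[5,11]$ (where $\nu_m=4$ by Lemma~\ref{lem:smallm}) Theorem~\ref{thm:smallm} covers $n\in[4,2^{m-1}-4]$, while for $m\geq 12$ Theorem~\ref{thm:fromnu} covers $n\in[\nu_m,2^{m-1}-\nu_m]$, and then the definition of $\mu_n$ converts existence into the inequality.
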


\begin{theorem}\rm Let $m\geq 5, n\geq 4$.  If $m\in[\mu_n, 2^{n}-\mu_n]$ then $\nu_m\leq n$.\end{theorem}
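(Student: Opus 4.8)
The plan is to mirror the proof of Theorem~\ref{thm:upperbd}, swapping the roles of rows and columns throughout. The key observation is that the conclusion $\nu_m \leq n$ is, by the very definition of $\nu_m$ as the fewest number of columns admitting an asymmetric matrix with $m$ rows, equivalent to the mere \emph{existence} of an asymmetric $m \times n$ matrix: any such matrix witnesses $\nu_m \leq n$. Note also that $\nu_m$ is well-defined for every $m \geq 5$, since Lemma~\ref{lem:mminusone} already supplies an asymmetric $m \times (m-1)$ matrix. So the entire theorem reduces to producing, for each $m \in [\mu_n, 2^n - \mu_n]$, an asymmetric $m \times n$ matrix, the heavy lifting for which has already been carried out in the existence theorems.

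With the reduction in hand, I would split on the size of $n$. For $n \geq 12$, Theorem~\ref{thm:frommu} directly furnishes an asymmetric $m \times n$ matrix for every $m \in [\mu_n, 2^n - \mu_n]$, which is exactly the hypothesized range, so this case is immediate. For the remaining small range $n \in [4,12]$, I would first invoke Lemma~\ref{lem:smalln}, which pins down $\mu_n = 5$, so that the hypothesis interval becomes $[\mu_n, 2^n - \mu_n] = [5,\, 2^n - 5]$. This is precisely the interval of row counts covered by Theorem~\ref{thm:smalln}, which then guarantees an asymmetric $m \times n$ matrix for each such $m$. Since the two ranges $n \in [4,12]$ and $n \geq 12$ together cover all $n \geq 4$ (overlapping harmlessly at $n=12$), existence holds in every case, and $\nu_m \leq n$ follows.

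I expect no genuine mathematical obstacle here; the difficulty was already absorbed by Theorems~\ref{thm:frommu} and~\ref{thm:smalln}. The only point demanding care is the bookkeeping at the endpoints in the small-$n$ regime: one must actually use the exact value $\mu_n = 5$ from Lemma~\ref{lem:smalln} to see that the hypothesis interval $[\mu_n, 2^n - \mu_n]$ \emph{coincides} with the range $[5,\, 2^n - 5]$ appearing in Theorem~\ref{thm:smalln}, rather than merely being contained in it. Finally, if the interval $[\mu_n, 2^n - \mu_n]$ happens to be empty (which can occur only when $2\mu_n > 2^n$), the statement is vacuous and nothing need be shown.
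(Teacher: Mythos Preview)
Your proposal is correct and follows essentially the same approach as the paper: the paper simply observes that the existence results for asymmetric $m\times n$ matrices (Theorem~\ref{thm:smalln} for $n\in[4,12]$ together with Lemma~\ref{lem:smalln}, and Theorem~\ref{thm:frommu} for $n\geq 12$) immediately yield $\nu_m\leq n$ by definition of $\nu_m$. Your case split and endpoint bookkeeping make explicit exactly what the paper leaves implicit in the sentence preceding Theorem~\ref{thm:upperbd}.
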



\section{A Lower Bound}\label{sec:lowerbd}

\1We will now use the tools from Sections \ref{sec:charmat}, \ref{sec:comp}, and \ref{sec:const} to get a lower bound on the cost of 2-distinguishing hypercubes. 

\begin{example}\label{ex:5by4} \rm Since $Q_1, Q_2, Q_3$ are not 2-distinguishable, for all $m\in \N$ there is no $m{\times} n$ asymmetric matrix with $n\in [1,3]$.  In particular, this is true for $m=5$. Thus $\nu_5\geq 4$. Further, by exhaustion there is no $m{\times} 4$ asymmetric matrix for $m\in [1,4]$. Thus $\mu_4 \geq 5$. Since by Bogstad and Cowen \cite{BC} $\rho(Q_4)=5$, there a distinguishing class for $Q_4$ consisting of 5 vertices,  Using  Theorem~\ref{thm:distasymm}, we can conclude that there is an asymmetric $5\times 4$ matrix.  Thus we have $\nu_5=4$ and $\mu_4=5$.\end{example}

\begin{lemma} \label{lem:ineq} \rm For $m\geq 5$, $2^{m-1} - \nu_m < 2^{m} - \nu_{m+1}$. \end{lemma}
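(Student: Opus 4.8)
The plan is to rewrite the target inequality in a form that isolates the growth of the sequence $\{\nu_m\}$ and then to observe that this growth is only linear, while the right‑hand side grows exponentially. Subtracting, the claim $2^{m-1} - \nu_m < 2^m - \nu_{m+1}$ is equivalent to $\nu_{m+1} - \nu_m < 2^m - 2^{m-1} = 2^{m-1}$. So it suffices to show that $\nu_{m+1}$ exceeds $\nu_m$ by less than $2^{m-1}$; in fact I would prove the stronger and cleaner bound $\nu_{m+1} < 2^{m-1}$, after which the conclusion follows because $\nu_m \geq 0$.

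To bound $\nu_{m+1}$ from above, I would invoke Lemma~\ref{lem:mminusone} with $m$ replaced by $m+1$: since $m+1 \geq 5$, there is an asymmetric $(m+1)\times m$ binary matrix. By the definition of $\nu_{m+1}$ as the fewest columns for which an asymmetric $(m+1)$-row matrix exists, this immediately gives $\nu_{m+1} \leq m$. (Alternatively one could use Corollary~\ref{cor:halfs} together with Lemma~\ref{lem:smallm} to get $\nu_{m+1}\le \max\{4,\lfloor (m+1)/2\rfloor\}$, but the bound $\nu_{m+1}\le m$ is all I need and avoids a case split.)

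It then remains to compare $m$ with $2^{m-1}$. An elementary induction (or direct check) shows $m < 2^{m-1}$ for every $m \geq 3$, hence certainly for $m \geq 5$. Chaining the inequalities gives $\nu_{m+1} \leq m < 2^{m-1} \leq 2^{m-1} + \nu_m$, where the last step uses only $\nu_m \geq 0$. Rearranging back yields $2^{m-1} - \nu_m < 2^m - \nu_{m+1}$, as desired.

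I do not expect a genuine obstacle here: the only thing that must be checked is that $\nu_{m+1}$ is finite and grows at most linearly in $m$, and this is exactly what the explicit construction behind Lemma~\ref{lem:mminusone} guarantees. The real content of the lemma is simply the exponential‑versus‑linear gap between $2^{m-1}$ and $\nu_{m+1}$, so once the upper bound on $\nu_{m+1}$ is in hand the estimate is immediate. If one wanted the tightest possible statement one could instead feed in $\nu_m \geq 4$ (from Example~\ref{ex:5by4} and the non‑$2$‑distinguishability of $Q_1,Q_2,Q_3$), but this refinement is unnecessary for the stated strict inequality.
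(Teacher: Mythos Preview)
Your argument is correct, and it is genuinely simpler than the paper's. The paper proceeds by a three-way case split: for $m\in[5,10]$ it invokes Lemma~\ref{lem:smallm} to get $\nu_m=\nu_{m+1}=4$; for $m=11$ it computes $\nu_{12}=5$ explicitly via the Complement Theorem and a transpose argument; and for $m\ge 12$ it uses Corollary~\ref{cor:halfs} to bound $|\nu_{m+1}-\nu_m|<\lfloor(m+1)/2\rfloor<2^{m-1}$. You bypass all of this by pulling a single uniform bound $\nu_{m+1}\le m$ directly out of Lemma~\ref{lem:mminusone}, which is available for every $m\ge 5$ and already enough to beat $2^{m-1}$. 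What the paper's case split buys is somewhat sharper information about the sequence $\{\nu_m\}$ along the way (e.g.\ the exact value $\nu_{12}=5$), but none of that extra precision is needed for the stated strict inequality, so your route is the more economical proof of the lemma as written.
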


\begin{proof}  We will break the proof of this fact into three cases:  $m\in [5,10]$, $m=11$, and $m\geq 12$. 

\1$\bullet${\bf \boldmath $m\in [5,10]$:} By Lemma~\ref{lem:smallm}, for $m\in [5,11]$, $\nu_m=4$.  Thus for any $m\in[5,10]$, $\nu_{m+1}-\nu_{m}= 0$.  This tells us immediately that $2^{m-1}-\nu_{m}< 2^{m}-\nu_{m+1}$ for $m\in[5,10]$. 

\1$\bullet${\bf \boldmath $m =11$:} Again by  Lemma~\ref{lem:smallm}, $\nu_{11}=4$.  Now we will show that $\nu_{12}=5$. By Example~\ref{ex:5by4}, $\mu_5 = 4$.  If there was an asymmetric $12{\times} 4$ matrix, the Complement Theorem would guarantee the existence of an asymmetric $(2^4-12){\times}4$ matrix. But since $\mu_4=\rho(Q_4)=5$, this is not possible.  The same true is for $n\in \{2,3\}$ as argued in Example~\ref{ex:5by4}.  Thus $\nu_{12}\geq 5$. Further we can build an asymmetric $12{\times} 5$ matrix in the following way.  Start with the asymmetric $5{\times} 4$ matrix described in Lemma~\ref{lem:mminusone}.  Find a complementary $5{\times}(2^4-4)$ matrix, $Y$,  whose columns are the 12 low weight columns that are not columns of $X$.  By the Complement Theorem, since $X$ is asymmetric, so is $Y$.  It is easy to check that the rows of $Y$ have weights 3, 4, and 5.  In particular, $Y$ has no rows of weight 6 or more.  Thus by Lemma~\ref{lem:Rules}\ref{lem:Rules:4}, $Y^T$ is an asymmetric $12{\times} 5$ matrix.  Now we can conclude that $\nu_{12}= 5$.  Finally get a proof  of this lemma, we compute $2^{10}-\nu_{11}$ and $2^{11}-\nu_{12}$ to see the strict inequality we are looking for. 

\1$\bullet${\bf \boldmath $m\geq 12$:} Since each of $\nu_m, \nu_{m+1}$ is positive, the magnitude of their difference is less than either of their magnitudes. For $k\geq 12$, by Corollary~\ref{cor:halfs}, there is an asymmetric $k{\times} \lfloor \frac{k}{2} \rfloor$ matrix.  Thus,  $\nu_k\leq \lfloor \frac{k}{2} \rfloor$, and we get that $|\nu_{m+1}-\nu_{m}| < \lfloor \frac{m+1}{2} \rfloor$. Further for $m\geq 3$, $\lfloor \frac{m+1}{2} \rfloor < 2^{m-2}$.  Putting these together we get, $|\nu_{m+1}-\nu_{m}|< 2^{m-1}$.  Simple algebra then yields $2^{m-1}-\nu_{m}<2^{m}-\nu_{m+1}$.  \end{proof}

\begin{corollary}\label{cor:existsunique}\rm For all $n\geq 13$, there exists a unique integer $m$ so that $n\in [2^{m-2}-\nu_{m-1}, 2^{m-1}-\nu_{m}]$. \end{corollary}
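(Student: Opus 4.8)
The plan is to reduce everything to the single sequence of right endpoints $R_m := 2^{m-1}-\nu_m$. The structural observation that makes the corollary nearly automatic is that the left endpoint of the $m$-th interval coincides with the right endpoint of the $(m-1)$-st: since $2^{m-2}-\nu_{m-1} = 2^{(m-1)-1}-\nu_{m-1} = R_{m-1}$, the interval in the statement is exactly $[R_{m-1}, R_m]$, and the whole family $\{[R_{m-1},R_m]\}_m$ is governed by the one sequence $\{R_m\}$, with consecutive intervals abutting at the common point $R_{m-1}$.

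For existence I would invoke Lemma~\ref{lem:ineq}, which is precisely the assertion that $R_m < R_{m+1}$ for all $m\ge 5$. A strictly increasing sequence of integers is unbounded, so $R_m\to\infty$, and the half-open intervals $(R_{m-1},R_m]$ with $m\ge 6$ partition all integers exceeding $R_5$. I would then locate the bottom of the chain by a direct computation: by Example~\ref{ex:5by4} (or Lemma~\ref{lem:smallm}) $\nu_5=4$, hence $R_5 = 2^4-4 = 12$. Thus every $n\ge 13$ exceeds $R_5$ and so lies in some $(R_{m-1},R_m]$ with $m-1\ge 5$, i.e.\ $m\ge 6$; since the left endpoint of the stated interval is $R_{m-1}$, this $n$ lies in $[R_{m-1},R_m]$ as required.

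For uniqueness I would again lean on the strict monotonicity: if $n$ lay in both $[R_{m-1},R_m]$ and $[R_{m'-1},R_{m'}]$ with $m<m'$, then $n\le R_m \le R_{m'-1}\le n$ forces $R_m=R_{m'-1}$, hence $m'=m+1$ and $n=R_m$, the lone shared endpoint of two consecutive intervals. The closed intervals as written therefore genuinely meet at these common points $R_m=L_{m+1}$, and I would resolve the ambiguity by using the half-open partition $(R_{m-1},R_m]$ --- equivalently the left endpoint $2^{m-2}-\nu_{m-1}+1$ that appears in the paper's main theorem --- under which each $n\ge 13$ lies in exactly one interval and determines a unique $m\ge 6$.

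The only real work, and the step I expect to be the main obstacle, is the base-case bookkeeping at the bottom of the chain. One must have the exact small values in hand --- $\nu_5=\cdots=\nu_{11}=4$ from Lemma~\ref{lem:smallm} and $\nu_{12}=5$ from the proof of Lemma~\ref{lem:ineq} --- in order to compute $R_5=12$, to confirm that $13$ is the first covered integer, and to check that $m\ge 6$ throughout the stated range. Once the monotonicity $R_m<R_{m+1}$ and the identity $2^{m-2}-\nu_{m-1}=R_{m-1}$ are established, existence and uniqueness fall out immediately; all of the subtlety is concentrated in pinning down where the chain of intervals begins.
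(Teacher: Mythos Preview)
Your proposal is correct and follows essentially the same route the paper intends: the corollary is placed immediately after Lemma~\ref{lem:ineq} precisely because the strict monotonicity $R_m<R_{m+1}$ together with the identity $2^{m-2}-\nu_{m-1}=R_{m-1}$ and the base value $R_5=12$ are all that is needed. You are in fact more careful than the paper, since you notice that the closed intervals as literally stated share the endpoint $R_m$ with the next interval, so that uniqueness holds only after passing to the half-open form $[2^{m-2}-\nu_{m-1}+1,\,2^{m-1}-\nu_m]$ used in Theorem~\ref{thm:formu}.
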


\begin{theorem} \label{thm:lowerbd} \rm Let $n\geq 4, m\geq 5$.  If  $n> 2^{m-1}-\nu_{m}$ then $\rho(Q_n) > m$. \end{theorem}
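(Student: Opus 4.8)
Since $\rho(Q_n)=\mu_n$, the claim $\rho(Q_n)>m$ is equivalent to $\mu_n>m$, i.e.\ to the statement that there is \emph{no} asymmetric $k{\times}n$ matrix with $k\leq m$. I would prove this in contrapositive form by ruling out such a matrix for every $k\leq m$ separately, the key point being that the bounds $2^{k-1}-\nu_k$ form a strictly increasing sequence (this is exactly Lemma~\ref{lem:ineq}), so it suffices to compare against the single largest value $2^{m-1}-\nu_m$. The whole argument rests on the Complement Theorem, which lets me trade a matrix with $n$ columns for its column-complement with $2^{k-1}-n$ columns and then invoke the definition of $\nu_k$.

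The main case is $5\leq k\leq m$. Suppose, for contradiction, that an asymmetric $k{\times}n$ matrix $X$ exists. By Lemma~\ref{lem:assump}, $X$ has $n<2^{k-1}$, no identical rows, and no isomorphic columns, so the hypotheses of the Complement Theorem are met; it produces an asymmetric $k{\times}(2^{k-1}-n)$ matrix $Y$ whose columns represent the missing isomorphism classes. Since $Y$ is an asymmetric matrix with $k$ rows, the definition of $\nu_k$ as the fewest columns of any asymmetric $k$-row matrix forces $2^{k-1}-n\geq \nu_k$, that is, $n\leq 2^{k-1}-\nu_k$. Now applying Lemma~\ref{lem:ineq} repeatedly along $k\leq m$ gives $2^{k-1}-\nu_k\leq 2^{m-1}-\nu_m$, whence $n\leq 2^{m-1}-\nu_m$, contradicting the hypothesis $n>2^{m-1}-\nu_m$. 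Thus no asymmetric $k{\times}n$ matrix exists for $5\leq k\leq m$.

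It remains to dispose of the small values $k\leq 4$. Here the Complement Theorem is not available (since $\nu_k$ is only meaningful for $k\geq 5$), but Lemma~\ref{lem:assump} already gives $n<2^{k-1}\leq 8$; combined with $n\geq 4$ this leaves only the finitely many configurations $k\leq 4$, $4\leq n\leq 7$, which are eliminated by the base facts of Example~\ref{ex:5by4} (that $\mu_4=5$ and no asymmetric matrix exists with $n\leq 3$ columns) together with a short complement check for the remaining $4{\times}n$ shapes. Consequently any asymmetric $k{\times}n$ matrix with $n\geq 4$ has $k\geq 5$, and combining the two ranges shows there is no asymmetric $k{\times}n$ matrix for any $k\leq m$; hence $\mu_n=\rho(Q_n)>m$.

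I expect the only genuinely delicate part to be confirming that the monotonicity of $2^{k-1}-\nu_k$ is valid over the full range $5\leq k\leq m$ with no circular dependence on the theorem being proved, which is precisely why Lemma~\ref{lem:ineq} is proved beforehand and used here as a black box; the small-$k$ bookkeeping is routine by comparison, and the Complement Theorem reduction is the structural heart of the argument.
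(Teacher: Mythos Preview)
Your proof is correct and follows essentially the same route as the paper: both arguments use the Complement Theorem to convert a hypothetical asymmetric $k\times n$ matrix into an asymmetric $k\times(2^{k-1}-n)$ matrix violating the minimality of $\nu_k$, and then invoke Lemma~\ref{lem:ineq} (strict monotonicity of $2^{k-1}-\nu_k$) to push the contradiction from a single $k$ up through the whole range $[5,m]$. The only organizational difference is that the paper first isolates the case $k=m$ (splitting into $n\geq 2^{m-1}$ and $2^{m-1}-\nu_m<n<2^{m-1}$) before extending downward, whereas you run the Complement argument uniformly for every $k\in[5,m]$; your explicit treatment of $k\leq 4$ is more careful than the paper, which silently relies on $\mu_n\geq 5$ from the base facts.
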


\begin{proof}  By Lemma~\ref{lem:assump}, if $n\geq 2^{m-1}$ then there is no asymmetric $m{\times} n$ binary matrix.  Suppose that $2^{m-1}-\nu_m < n < 2^{m-1}$. Then $n=2^{m-1}-\nu_m + k$ for some $k\in[1,\nu_m-1]$. By the Complement Theorem, the existence of an asymmetric $m{\times} (2^{m-1}-\nu_m + k)$ matrix guarantees the existence of an asymmetric $m{\times}(\nu_m - k)$ matrix. But since $k$ is strictly positive, this contradicts the minimality in the definition of $\nu_m$. Thus there is no asymmetric $m{\times}n$ matrix with $n> 2^{m-1}-\nu_m$.

\1Let $m$ be fixed.  If whenever we have $n> 2^{m-1}-\nu_m$ we are guaranteed that  $n> 2^{k-1}-\nu_k$ for all $k\in [5,m-1]$, then we could conclude that $\mu_n \not\in [5, m]$ and thus that $\mu_n > m$.   By induction, it is enough to show that $2^{m-1}-\nu_{m}\leq 2^{m}-\nu_{m+1}$, which we have by Lemma~\ref{lem:ineq}.

\1Thus when $n> 2^{m-1}-\nu_m$, $\rho(Q_n) > m$.\end{proof}

\1The analogous statements given below have very similar proofs to those above.

\begin{lemma} \label{lem:ineqm} \rm For $n\geq 4$, $2^{n} - \mu_n < 2^{n+1} - \mu_{n+1}$. \end{lemma}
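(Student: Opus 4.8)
The plan is to prove Lemma~\ref{lem:ineqm} as the mirror image of Lemma~\ref{lem:ineq}, interchanging the roles of $\mu_n$ and $\nu_m$ and replacing the base-case information from Lemma~\ref{lem:smallm} with that from Lemma~\ref{lem:smalln}. Since Lemma~\ref{lem:smalln} gives $\mu_n=5$ for every $n\in[4,12]$, the natural case split is $n\in[4,11]$, the single boundary value $n=12$, and $n\geq 13$; these are exactly the analogues of the ranges $m\in[5,10]$, $m=11$, and $m\geq 12$ used for Lemma~\ref{lem:ineq}.

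For $n\in[4,11]$ both $n$ and $n+1$ lie in $[4,12]$, so $\mu_n=\mu_{n+1}=5$ and the claim $2^n-\mu_n<2^{n+1}-\mu_{n+1}$ reduces to $2^n<2^{n+1}$. For $n\geq 13$ I would repeat the general argument of Lemma~\ref{lem:ineq}: both $\mu_n$ and $\mu_{n+1}$ are positive, so $\mu_{n+1}-\mu_n<\mu_{n+1}$, and Corollary~\ref{cor:halfs} gives $\mu_{n+1}\leq\lfloor\frac{n+1}{2}\rfloor<2^n$. Hence $\mu_{n+1}-\mu_n<2^n=2^{n+1}-2^n$, which rearranges to the desired strict inequality.

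The only delicate point, and the counterpart of the $m=11$ step that forced the computation of $\nu_{12}$ in Lemma~\ref{lem:ineq}, is the boundary value $n=12$, where $\mu_{12}=5$ is known from Lemma~\ref{lem:smalln} but $\mu_{13}$ falls outside that lemma's range. Here I would observe that the inequality needs only an upper bound on $\mu_{13}$: since $13\geq 12$, Corollary~\ref{cor:halfs} gives $\mu_{13}\leq\lfloor\frac{13}{2}\rfloor=6$, so $2^{12}-\mu_{12}=4091<8186\leq 2^{13}-\mu_{13}$. If a self-contained analogue of the $\nu_{12}$ computation is preferred, the exact value $\mu_{13}=6$ can be obtained just as in Lemma~\ref{lem:ineq}: a hypothetical asymmetric $5\times 13$ matrix would, by the Complement Theorem, yield an asymmetric $5\times(2^4-13)=5\times 3$ matrix, which is impossible because $Q_3$ is not 2-distinguishable (cf.\ Example~\ref{ex:5by4}), forcing $\mu_{13}\geq 6$, while Corollary~\ref{cor:halfs} supplies the matching upper bound.

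I expect no serious obstacle: the genuinely symmetric structure of the $\mu$/$\nu$ theory means the whole argument transports almost verbatim, and the lone boundary case $n=12$ is handled more cheaply than its $\nu$-counterpart because only an upper bound on $\mu_{13}$ is required rather than its exact value.
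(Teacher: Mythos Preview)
Your proposal is correct and follows exactly the mirror-image argument the paper intends (the paper states only that the proof is ``very similar'' to that of Lemma~\ref{lem:ineq}). Your observation that the boundary case $n=12$ can be settled with just the upper bound $\mu_{13}\le 6$ from Corollary~\ref{cor:halfs}, rather than the exact value, is a small but genuine simplification over the analogous $m=11$ step in Lemma~\ref{lem:ineq}.
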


\begin{corollary}\label{cor:existsunique}\rm For all $m\geq 12$, there exists a unique integer $n$ so that $m\in [2^{n-1}-\mu_{n-1}, 2^{n}-\mu_{n}]$. \end{corollary}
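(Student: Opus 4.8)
The plan is to show that the closed intervals $J_n := [2^{n-1}-\mu_{n-1},\, 2^n - \mu_n]$, as $n$ ranges over $n\geq 5$, tile an infinite tail of the integers, so that each $m\geq 12$ falls into exactly one of them. The engine is Lemma~\ref{lem:ineqm} together with the elementary observation that the left endpoint of $J_{n+1}$ coincides with the right endpoint of $J_n$: substituting $n+1$ for $n$ in the lower endpoint gives $2^{(n+1)-1}-\mu_{(n+1)-1} = 2^n - \mu_n$, which is exactly the upper endpoint of $J_n$. Thus consecutive intervals abut, and the only questions left are whether they leave gaps, whether they overlap in more than an endpoint, and where the chain begins.

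First I would record that, writing $b_n = 2^n - \mu_n$ for this common upper/lower endpoint, Lemma~\ref{lem:ineqm} says precisely that $b_n < b_{n+1}$ for $n\geq 4$; equivalently each $J_n$ is nonempty and $J_{n+1}$ begins exactly where $J_n$ ends, so the half-open pieces $[2^{n-1}-\mu_{n-1}+1,\, 2^n - \mu_n]$ are nonempty, pairwise disjoint, and consecutive. Next I would pin down the base case. Since $\mu_3$ is undefined ($Q_3$ is not $2$-distinguishable), the chain starts at $n=5$: using $\mu_4 = \mu_5 = 5$ from Example~\ref{ex:5by4} and Lemma~\ref{lem:smalln}, one computes $J_5 = [11,27]$, so the first half-open piece is $[12,27]$ and the tiling begins at $m=12$. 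Finally, $b_n\to\infty$ --- for instance because $\mu_n \leq \lfloor n/2\rfloor$ for $n\geq 12$ by Corollary~\ref{cor:halfs}, whence $b_n \geq 2^n - n/2$ --- so the union of these pieces is all of $[12,\infty)$. This yields both existence (every $m\geq 12$ lies in some $J_n$) and uniqueness (it lies in exactly one of the half-open pieces).

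The step I expect to require the most care is the uniqueness claim as literally phrased with closed intervals. Because $J_n$ and $J_{n+1}$ share the endpoint $b_n = 2^n - \mu_n$, the integer $m = b_n$ lies in both closed intervals, so genuine uniqueness holds only once one adopts the half-open convention $2^{n-1}-\mu_{n-1}+1 \leq m \leq 2^n - \mu_n$ --- exactly the convention used in the characterization of $\nu_m = n$ in the main theorem. I would therefore phrase the whole argument in terms of these half-open pieces and note that this is also what produces the threshold $m\geq 12$ in the statement (the lowest piece starts at $2^4-\mu_4+1 = 12$, not at $11$). The remaining verifications --- nonemptiness of each piece and unboundedness of the endpoints --- are immediate consequences of Lemma~\ref{lem:ineqm} and Corollary~\ref{cor:halfs} and require no separate work.
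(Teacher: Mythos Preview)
Your argument is correct and is exactly the explicit version of what the paper leaves implicit: the corollary is stated in the paper without proof, as an immediate consequence of Lemma~\ref{lem:ineqm}, and you have filled in the details using precisely that lemma together with the base values $\mu_4=\mu_5=5$. Your careful observation that the closed intervals $J_n$ and $J_{n+1}$ share the endpoint $b_n=2^n-\mu_n$ --- so that genuine uniqueness requires the half-open convention used in Theorem~\ref{thm:fornu} --- is a valid point about the statement as literally written; one small redundancy is that $b_n\to\infty$ already follows from $b_n$ being a strictly increasing sequence in $\Z$, so the appeal to Corollary~\ref{cor:halfs} is unnecessary.
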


\begin{theorem} \label{thm:lowerbd} \rm Let $n\geq 4, m\geq 5$.  If  $m> 2^{n}-\mu_{n}$ then $\nu_m > n$. \end{theorem}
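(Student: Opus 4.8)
The plan is to mirror the proof of the companion lower bound (the theorem asserting that $n > 2^{m-1}-\nu_m$ forces $\rho(Q_n) > m$), exchanging the roles of rows and columns throughout. Concretely, I would first show that the hypothesis $m > 2^n - \mu_n$ forbids an asymmetric $m \times n$ matrix, and then bootstrap this to forbid an asymmetric $m \times n'$ matrix for \emph{every} $n'$ with $4 \le n' \le n$; the latter is precisely the assertion $\nu_m > n$.

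For the nonexistence of an asymmetric $m \times n$ matrix, I would split into two ranges. If $m \ge 2^n$, then Lemma~\ref{lem:assump}, which requires $m < 2^n$ for any asymmetric $m \times n$ matrix, immediately rules one out. If instead $2^n - \mu_n < m < 2^n$, write $m = 2^n - \mu_n + k$ with $k \in [1, \mu_n - 1]$, and suppose for contradiction that an asymmetric $m \times n$ matrix $X$ exists. By Lemma~\ref{lem:assump}, $X$ has no identical rows, no isomorphic columns, and satisfies $m < 2^n$, so the row-complement form of the Complement Theorem (Lemma~\ref{lem:ZXsymm}) applies: the $(2^n - m) \times n$ matrix $Z$ whose rows are the length-$n$ binary strings not occurring as rows of $X$ is also asymmetric. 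But $2^n - m = \mu_n - k < \mu_n$, contradicting the minimality in the definition of $\mu_n$. Hence no asymmetric $m \times n$ matrix exists once $m > 2^n - \mu_n$.

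To upgrade this to $\nu_m > n$, I need $m$ to forbid asymmetric $m \times n'$ matrices for all $n'$ with $4 \le n' \le n$. The key ingredient is the monotonicity furnished by Lemma~\ref{lem:ineqm}, namely that $2^{n'} - \mu_{n'}$ is strictly increasing in $n'$; combined with the hypothesis this gives $m > 2^n - \mu_n \ge 2^{n'} - \mu_{n'}$ for each $n' \le n$, so the argument of the preceding paragraph applies verbatim at each such $n'$. Since there is also no asymmetric $m \times n'$ matrix for $n' \in [1,3]$ (the hypercubes $Q_1, Q_2, Q_3$ are not $2$-distinguishable, as recorded in Example~\ref{ex:5by4}, whence $\nu_m \ge 4$), the smallest admissible column count cannot lie in $[4,n]$, and therefore $\nu_m > n$.

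I expect the only genuine care to be in the bootstrap step: checking that the inequality of Lemma~\ref{lem:ineqm} is oriented so that the hypothesis at $n$ propagates downward to every $n' \le n$, and confirming that at each $n'$ the Complement Theorem's hypotheses (no repeated rows, no isomorphic columns, $m < 2^{n'}$) hold via Lemma~\ref{lem:assump}. Both are routine, since the substantive content has already been isolated into Lemma~\ref{lem:ineqm} and the Complement Theorem; the proof is therefore \emph{entirely similar} to the companion result, as the paper notes.
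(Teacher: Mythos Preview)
Your proposal is correct and follows essentially the same approach that the paper indicates: you mirror the proof of the companion lower bound by exchanging rows and columns, using Lemma~\ref{lem:assump} and the row form of the Complement Theorem (Lemma~\ref{lem:ZXsymm}) to rule out an asymmetric $m\times n$ matrix when $m>2^n-\mu_n$, and then invoking the monotonicity of Lemma~\ref{lem:ineqm} to propagate the obstruction to all $n'\in[4,n]$ (with $n'\in[1,3]$ handled by the non-2-distinguishability of $Q_1,Q_2,Q_3$). This is exactly the ``entirely similar'' argument the paper alludes to.
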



\section{The Distinguishing Cost}\label{sec:distcost}

\1Putting together Theorems~\ref{thm:upperbd} and \ref{thm:lowerbd}, other supporting work, and recalling that $\mu_n=\rho(Q_n)$ we get our final result.

\begin{theorem}\label{thm:formu} \rm  For $n\in [4,12]$, $\rho(Q_n)=5$.  For $n\geq 13$, $\rho(Q_n)=m$ where $m$ is unique integer for which $n\in [2^{m-2}-\nu_{m-1}+1, 2^{m-1}-\nu_m]$.\end{theorem}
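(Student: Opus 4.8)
The plan is to prove both equalities by translating everything into the language of asymmetric matrices via the identity $\rho(Q_n)=\mu_n$, and then to sandwich $\mu_n$ between a matching upper and lower bound. The case $n\in[4,12]$ is immediate: Lemma~\ref{lem:smalln} gives $\mu_n=5$, and since $\rho(Q_n)=\mu_n$ this is exactly the claim $\rho(Q_n)=5$. So the real work is the case $n\geq 13$.

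First I would fix $n\geq 13$ and invoke Corollary~\ref{cor:existsunique} to produce the unique integer $m$ with $n\in[2^{m-2}-\nu_{m-1},\,2^{m-1}-\nu_m]$; note that $n\geq 13$ forces $m\geq 6$, so every $\nu$ appearing below is defined. Before anything else I would reconcile this closed interval with the partition used in the statement: because Lemma~\ref{lem:ineq} shows the right endpoints $2^{m-1}-\nu_m$ are strictly increasing in $m$, consecutive closed intervals can meet only at the shared point $2^{m-1}-\nu_m$, so replacing each left endpoint by $2^{m-2}-\nu_{m-1}+1$ produces genuinely disjoint intervals with the same union. Hence the $m$ of the corollary is precisely the unique $m$ with $2^{m-2}-\nu_{m-1}+1\leq n\leq 2^{m-1}-\nu_m$, and it suffices to show $\mu_n=m$ for this $m$.

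Next I would establish the two bounds. For the upper bound $\mu_n\leq m$ I would apply Theorem~\ref{thm:upperbd}, whose hypothesis is $n\in[\nu_m,2^{m-1}-\nu_m]$. The right endpoint condition $n\leq 2^{m-1}-\nu_m$ is immediate. The left endpoint condition $n\geq\nu_m$ follows from $n\geq 2^{m-2}-\nu_{m-1}+1$ together with the elementary estimate $2^{m-2}-\nu_{m-1}+1\geq\nu_m$, valid for $m\geq 6$ since $\nu_{m-1},\nu_m\leq\lfloor m/2\rfloor$ by Corollary~\ref{cor:halfs} while $2^{m-2}$ grows exponentially. For the matching lower bound $\mu_n\geq m$ I would apply the lower bound result (Theorem~\ref{thm:lowerbd}, in the form \lq\lq $n>2^{m-1}-\nu_m\Rightarrow\rho(Q_n)>m$'') at level $m-1$: its hypothesis becomes $n>2^{(m-1)-1}-\nu_{m-1}=2^{m-2}-\nu_{m-1}$, which is exactly what the shifted left endpoint $n\geq 2^{m-2}-\nu_{m-1}+1$ guarantees, yielding $\rho(Q_n)>m-1$, i.e. $\mu_n\geq m$. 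Combining the two bounds gives $m\leq\mu_n\leq m$, so $\rho(Q_n)=\mu_n=m$.

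I expect the main obstacle to be bookkeeping rather than deep mathematics: ensuring the \lq\lq $+1$'' shift of the left endpoint is handled consistently on both sides, since it simultaneously converts the overlapping closed intervals of Corollary~\ref{cor:existsunique} into a true partition and supplies the strict inequality needed to invoke the lower bound theorem at level $m-1$. A secondary point worth recording carefully is the estimate $2^{m-2}-\nu_{m-1}+1\geq\nu_m$, which is what certifies $n\geq\nu_m$ so that Theorem~\ref{thm:upperbd} genuinely applies.
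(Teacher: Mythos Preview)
Your proof is correct and follows essentially the same route as the paper: invoke Corollary~\ref{cor:existsunique} for the unique $m$, then sandwich $\mu_n$ between the upper bound from Theorem~\ref{thm:upperbd} and the lower bound from Theorem~\ref{thm:lowerbd} applied at level $m-1$. The one small slip is that Corollary~\ref{cor:halfs} only asserts $\nu_m\leq\lfloor m/2\rfloor$ for $m\geq 12$, so for $m\in[6,12]$ the estimate $2^{m-2}-\nu_{m-1}+1\geq\nu_m$ should instead cite Lemma~\ref{lem:smallm} (and $\nu_{12}=5$); the inequality itself remains valid, and in fact the paper's own proof skips this verification entirely.
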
    

\begin{proof} Lemma~\ref{lem:smalln}, for $n\in[4,12]$, $\mu_n\geq 5$.  By Theorem~\ref{thm:smalln},  $\mu_n\leq 5$.  Thus for $n\in[\nu_5,2^4-\nu_5]$, $\rho(Q_n) = 5$. 

\1By Corollary~\ref{cor:existsunique}, for $n\geq 13$, there exists a unique $m$ so that $n\in [2^{m-2}-\nu_{m-1}+ 1, 2^{m-1} - \mu_m]$.  By Theorem~\ref{thm:upperbd}, given such an $m$, $\rho(Q_n) \leq m$ and by Theorem~\ref{thm:lowerbd}, $\rho(Q_n)>m-1$.  Thus $\rho(Q_n)=m$. \end{proof}

\1A proof entirely similar to that for Theorem~\ref{thm:formu}, gives us the following.  Note that this would require the symmetric statements from Section~\ref{sec:lowerbd}.

\begin{theorem}\label{thm:fornu} \rm  For $m\in [5,11]$, $\nu_m=4$.  For $m\geq 12$, $\nu_m=n$ where $n$ is the unique integer for which $m\in [2^{n-1}-\mu_{n-1}+1, 2^{n}-\mu_n]$.\end{theorem}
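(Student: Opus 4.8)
The plan is to mirror the proof of Theorem~\ref{thm:formu} essentially verbatim, exchanging the roles of rows and columns throughout: everywhere that proof uses $\mu_n$, $n$, $\rho(Q_n)$, and the interval indexed by $m$, I would instead use $\nu_m$, $m$, and the interval indexed by $n$, and I would invoke the symmetric counterparts of the supporting results already recorded in Sections~\ref{sec:const} and \ref{sec:lowerbd}. The first assertion of the theorem, that $\nu_m = 4$ for $m\in[5,11]$, requires no work at all: it is precisely the content of Lemma~\ref{lem:smallm}.

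For the case $m\geq 12$, I would proceed in three moves. First, apply the symmetric uniqueness corollary (the analog of Corollary~\ref{cor:existsunique} asserting that for every $m\geq 12$ there is a unique $n$ with $m\in[2^{n-1}-\mu_{n-1}, 2^{n}-\mu_n]$), which rests on the monotonicity of the sequence $2^{n}-\mu_n$ supplied by Lemma~\ref{lem:ineqm}; this fixes the candidate $n$ and reduces the claim to $\nu_m = n$. Second, for the upper bound $\nu_m\leq n$, I would invoke the symmetric version of Theorem~\ref{thm:upperbd}, namely the (unlabeled) statement that $m\in[\mu_n, 2^{n}-\mu_n]$ forces $\nu_m\leq n$; membership in this interval follows from $m\leq 2^{n}-\mu_n$ together with $m\geq 2^{n-1}-\mu_{n-1}+1\geq \mu_n$. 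Third, for the lower bound, I would apply the symmetric version of Theorem~\ref{thm:lowerbd} (the statement that $m>2^{n}-\mu_n$ implies $\nu_m>n$) with $n$ replaced by $n-1$: since $m\geq 2^{n-1}-\mu_{n-1}+1 > 2^{n-1}-\mu_{n-1}$, we conclude $\nu_m>n-1$, hence $\nu_m\geq n$. Combining the two bounds yields $\nu_m=n$.

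The one genuinely delicate point, and the step I expect to be the main obstacle, is the boundary behavior at the bottom of the range rather than any of the interval arithmetic. For the left endpoint $2^{n-1}-\mu_{n-1}$ even to be defined we need $\mu_{n-1}$ to exist, and this fails at $n=4$ because $Q_3$ is not $2$-distinguishable. I would therefore need to verify that for every $m\geq 12$ the unique $n$ produced by the corollary satisfies $n\geq 5$, so that $n-1\geq 4$ and both $\mu_{n-1}$ and the application of the symmetric lower bound with index $n-1$ are legitimate. Concretely this amounts to confirming that $m=12$ already forces $n=5$, equivalently that $\nu_{12}=5$, which was established in the proof of Lemma~\ref{lem:ineq} using $\mu_4=\rho(Q_4)=5$ and the Complement Theorem. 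Once this anchoring is secured, the remaining verifications, including the harmless $+1$ shift reconciling the corollary's interval with the interval stated in the theorem, are routine and identical in form to those appearing in the proof of Theorem~\ref{thm:formu}.
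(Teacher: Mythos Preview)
Your proposal is correct and takes essentially the same approach as the paper, which simply states that the proof is ``entirely similar to that for Theorem~\ref{thm:formu}'' using the symmetric statements from Section~\ref{sec:lowerbd}. Your explicit attention to the boundary issue---verifying that $m\geq 12$ forces $n\geq 5$ so that $\mu_{n-1}$ is defined and the symmetric lower-bound theorem applies at index $n-1$---is in fact more careful than the paper's terse treatment, which leaves the analogous check (that $n\geq 13$ forces $m\geq 6$) implicit in the proof of Theorem~\ref{thm:formu} as well.
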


\1We can work recursively to compute $\rho(Q_n)$ for any $n\geq 4$ in the following way.  From Theorem~\ref{thm:formu}, we get when $n\in [4, 12], \ \mu_n=5$, and by Theorem~\ref{thm:fornu}, when $m\in [5,11], \ \nu_m=4$.  We can then use Theorem~\ref{thm:formu} repeatedly to get that for $n\in[13, 2^{10}-4]$, $\mu_n= 1+\lceil \log_2 (n+4)\rceil$. For $n>2^{10}-4$, we need $\nu_m$ for $m>11$.  Use   Theorem~\ref{thm:fornu} to find that when $m\in [2^4-\mu_4+1, 2^5-\mu_5], \ \nu_m=5$.  Then we again use Theorem~\ref{thm:formu} to compute $\mu_n= 1+\lceil \log_2 (n+5)\rceil$ for $n\in[2^{10}-3, 2^{26}-5]$.   We can continue in this manner.

\1However, even without specific values for $\nu_m$, we can get a good idea of the value for $\mu_n=\rho(Q_n)$. 

\begin{theorem}\label{thm:oneoftwo} \rm If $n\geq 5$, then $\rho(Q_n) \in \{1 + \lceil \log_2 n\rceil, 2 + \lceil \log_2 n\rceil\}$.  \end{theorem}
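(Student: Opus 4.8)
The plan is to derive this result directly from the exact characterization in Theorem~\ref{thm:formu}, so the bulk of the work is translating the bounds $2^{m-2}-\nu_{m-1}+1 \leq n \leq 2^{m-1}-\nu_m$ into a statement about $\lceil \log_2 n\rceil$. First I would dispose of the small cases $n\in[5,12]$ by hand: here $\rho(Q_n)=5$ by Theorem~\ref{thm:formu}, and one checks directly that $\lceil \log_2 n\rceil$ takes values $3$ (for $n\in[5,8]$) and $4$ (for $n\in[9,12]$), so that $5\in\{1+\lceil\log_2 n\rceil, 2+\lceil\log_2 n\rceil\}$ in each case. For $n\geq 13$, let $m=\rho(Q_n)$ be the unique integer with $n\in[2^{m-2}-\nu_{m-1}+1,\, 2^{m-1}-\nu_m]$; the goal becomes showing $m-\lceil\log_2 n\rceil \in \{1,2\}$, equivalently $2^{m-2} < n \leq 2^{m-1}$ would force $\lceil\log_2 n\rceil = m-1$, but the presence of the $\nu$ correction terms pushes $n$ slightly below $2^{m-2}$, so I expect $\lceil\log_2 n\rceil \in \{m-2, m-1\}$.

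The key step is to bound $n$ between consecutive powers of $2$ using the size of $\nu_m$. Since by Corollary~\ref{cor:halfs} we have $\nu_k \leq \lfloor k/2\rfloor$ for $k\geq 12$ (and $\nu_k=4$ for smaller $k$), the correction terms $\nu_{m-1},\nu_m$ are tiny compared to the powers $2^{m-2},2^{m-1}$. Concretely, from the upper bound $n \leq 2^{m-1}-\nu_m \leq 2^{m-1}-1 < 2^{m-1}$ I would conclude $\lceil\log_2 n\rceil \leq m-1$. For the lower direction, from $n \geq 2^{m-2}-\nu_{m-1}+1 \geq 2^{m-2} - \lfloor(m-1)/2\rfloor + 1$, and since $\lfloor(m-1)/2\rfloor < 2^{m-3}$ for all relevant $m$, I get $n > 2^{m-3}$, hence $\lceil\log_2 n\rceil \geq m-2$. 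Combining, $\lceil\log_2 n\rceil\in\{m-2,m-1\}$, which rearranges exactly to $m = \rho(Q_n)\in\{2+\lceil\log_2 n\rceil, 1+\lceil\log_2 n\rceil\}$.

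The main obstacle I anticipate is purely bookkeeping at the boundary: I must ensure the inequality $\lfloor(m-1)/2\rfloor < 2^{m-3}$ (or whatever clean bound on $\nu_{m-1}$ I use) holds across the full range, including the transition region around $m=12$ where the bound on $\nu$ switches from the constant $4$ to $\lfloor m/2\rfloor$. Since $n\geq 13$ corresponds to $m\geq 6$, I would simply verify that $\nu_{m-1} < 2^{m-3}$ holds for every $m\geq 6$ — trivially true since $\nu_{m-1}\leq\max(4,\lfloor(m-1)/2\rfloor)$ grows linearly while $2^{m-3}$ grows exponentially — and that the strict inequality $n\leq 2^{m-1}-\nu_m < 2^{m-1}$ never lets $\lceil\log_2 n\rceil$ reach $m$. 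Care is needed only to confirm that no off-by-one in the ceiling function creeps in, which amounts to checking that $n$ never equals a power of $2$ in a way that bumps the ceiling to the wrong value; the strict gaps provided by the positive $\nu$ terms handle this.
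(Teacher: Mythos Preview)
Your argument is correct and in fact cleaner than the paper's. Both proofs start from Theorem~\ref{thm:formu} and dispose of $n\in[5,12]$ by inspection, but from there they diverge. You sandwich $n$ directly between powers of two: from $n\leq 2^{m-1}-\nu_m<2^{m-1}$ you get $\lceil\log_2 n\rceil\leq m-1$, and from $n\geq 2^{m-2}-\nu_{m-1}+1>2^{m-3}$ (using only $\nu_{m-1}<2^{m-3}$) you get $\lceil\log_2 n\rceil\geq m-2$. The paper instead rewrites the defining interval as $m-1+(\nu_m-\nu_{m-1})\leq 1+\log_2(n+\nu_m)\leq m$, invokes the (unproven in the paper) fact that $\nu_m-\nu_{m-1}\in\{0,1\}$ to extract the sharper intermediate identity $m=1+\lceil\log_2(n+\nu_m)\rceil$, and then bounds the correction term via an iterated use of Corollary~\ref{cor:halfs}, namely $\nu_{\mu_n}\leq \mu_n/2\leq n/4$, to land in the two-element set. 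What your approach buys is simplicity: you need only the crude linear bound $\nu_{m-1}\leq\max(4,\lfloor(m-1)/2\rfloor)$ against an exponential, and you avoid the unproved monotonicity step. What the paper's approach buys is the exact formula $\rho(Q_n)=1+\lceil\log_2(n+\nu_{\rho(Q_n)})\rceil$ along the way, which is of independent interest even though it is not stated as a separate result.
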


\begin{proof}  By computation this is true for $n\in[4,12]$. By Theorem~\ref{thm:formu}, for $n\geq 13$, $\mu_n = \rho(Q_n)=m$ if and only if $2^{m-2}-\nu_{m-1}+1\leq n \leq 2^{m-1}-\nu_m$.  This  is equivalent to $m-1+(\nu_m-\nu_{m-1}) \leq 1 + \log_2(n+\nu_m) \leq m$. Note that $\nu_m-\nu_{m-1}\in \{0,1\}$.   From here it is easy to see that $m=\lceil 1 +  \log_2(n+\nu_m) \rceil = 1 + \lceil \log_2 (n+\nu_m) \rceil$.  Since $m=\mu_n$ here, $1 + \lceil \log_2 (n+\nu_m) \rceil=1 + \lceil \log_2 (n+\nu_{\mu_n}) \rceil$.  By Corollary~\ref{cor:halfs},  $\nu_m \leq \frac{m}{2}$ and $\mu_n \leq \frac{n}{2}$.  Thus $\mu_n \leq 1 + \lceil \log_2 (n+\nu_{\frac{n}{2}}) \leq 1 + \lceil \log_2 (n+\frac{n}{4})\rceil = 1 + \lceil \log_2 \frac{5}{4} + \log_2 n\rceil \approx 1 + \lceil 0.322 + \log_2 n\rceil \in \{1 + \lceil \log_2 n\rceil, 2 + \lceil \log_2 n\rceil\}$.  \end{proof}

\1As discussed in the Introduction, we can sometimes use determining numbers to write our results on distinguishing cost. Since for all $n$, $\Det(Q_n) = 1 + \lceil \log_2 n\rceil$ \cite{Bou2009}, we immediately get the following corollary.

\begin{corollary} For $n\geq 4$, $\rho(Q_n)\in \{\Det(Q_n), \Det(Q_n)+1\}$. \end{corollary}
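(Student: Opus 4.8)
The plan is to treat this corollary as a one-line substitution into Theorem~\ref{thm:oneoftwo}. The two ingredients are already available: the imported evaluation $\Det(Q_n) = 1 + \lceil \log_2 n\rceil$ from \cite{Bou2009}, valid for all $n$, and the two-value bracket $\rho(Q_n) \in \{1 + \lceil \log_2 n\rceil,\, 2 + \lceil \log_2 n\rceil\}$ supplied by Theorem~\ref{thm:oneoftwo} for $n \geq 5$. Once $\Det(Q_n)$ is written as $1 + \lceil \log_2 n\rceil$, the right-hand bracket is literally $\{\Det(Q_n),\, \Det(Q_n)+1\}$, and the desired membership $\rho(Q_n) \in \{\Det(Q_n),\, \Det(Q_n)+1\}$ falls out with no further graph-theoretic input. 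All the mathematical content sits in the already-proved cost estimate and the cited determining-number formula.

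Concretely I would carry out three steps. First, invoke \cite{Bou2009} to replace $\Det(Q_n)$ by $1 + \lceil \log_2 n\rceil$. Second, for $n \geq 5$, invoke Theorem~\ref{thm:oneoftwo} to bracket $\rho(Q_n)$ between $1 + \lceil \log_2 n\rceil$ and $2 + \lceil \log_2 n\rceil$, and rewrite this bracket as $\{\Det(Q_n),\, \Det(Q_n)+1\}$ to conclude. Third, since Theorem~\ref{thm:oneoftwo} is stated only for $n \geq 5$, dispatch the remaining value $n = 4$ separately, using the explicit datum $\rho(Q_4) = 5$ from Example~\ref{ex:5by4} (equivalently \cite{BC}) together with the value $\Det(Q_4) = 1 + \lceil \log_2 4\rceil$.

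The step I expect to be the main obstacle is precisely this boundary value $n = 4$. For $n \geq 5$ the substitution is automatic, but at $n = 4$ the logarithmic bracket of Theorem~\ref{thm:oneoftwo} provides no information, so the membership has to be certified directly from the tabulated small values of $\rho(Q_4)$ and $\Det(Q_4)$ rather than from the asymptotic estimate. I would therefore pin down both numbers by hand and verify the inclusion in this single edge case before asserting the corollary over the full range $n \geq 4$; the range $n \geq 5$ then follows verbatim from the argument above.
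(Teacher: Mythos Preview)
Your approach for $n\geq 5$ is exactly the paper's: the text immediately preceding the corollary says ``Since for all $n$, $\Det(Q_n) = 1 + \lceil \log_2 n\rceil$ \cite{Bou2009}, we immediately get the following corollary,'' i.e.\ a one-line substitution of the determining-number formula into Theorem~\ref{thm:oneoftwo}. So on the range $n\ge 5$ your proposal and the paper's argument coincide.

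You are also right to isolate $n=4$ as the only place requiring separate work, and right that Theorem~\ref{thm:oneoftwo} gives nothing there. But if you actually carry out the check you outline, it fails: by Example~\ref{ex:5by4} (and Lemma~\ref{lem:smalln}) one has $\rho(Q_4)=5$, while the cited formula gives $\Det(Q_4)=1+\lceil \log_2 4\rceil = 3$, so $\rho(Q_4)=5\notin\{3,4\}=\{\Det(Q_4),\Det(Q_4)+1\}$. Thus the inclusion cannot be ``verified'' at $n=4$; the corollary as stated (and the matching claim in the abstract) is simply false at that boundary value and should read $n\ge 5$. Your plan is the same as the paper's where the statement is true, and your instinct to scrutinize $n=4$ is exactly what exposes the misstatement rather than a gap in your reasoning.
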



\section{Open Questions}\label{sec:quest}

\begin{question} \rm Find a more direct way to express the recursive relationship between $n, \mu_n, m$ and $\nu_m$.  \end{question}

\begin{question}\rm Recall that for any 2-distinguishable graph $G$, $\Det(G) \leq \rho(G)$.  Further, we have found that most Cartesian products of prime graphs have  $\rho(G) \in \{\Det(G), \Det(G)+1\}$ \cite{Bou2013a},  and certain Kneser graphs have $\rho(G) = \Det(G)+1$ \cite{Bou2013b}.  Classify the graphs for which $\rho(G) \in \{\Det(G), \Det(G)+1\}$.  Or $\rho(G)= \Det(G)$.  Or $\rho(G) = \Det(G)+1$.\end{question}


\appendix

\section{\boldmath  Existence Proofs for Small Values}\label{ap:small}

\noindent{\bf Lemma~\ref{lem:smalln}:} \rm For $n\in[4,12]$, $\mu_n = 5$.

\begin{proof}   By Lemma~\ref{lem:assump}, there can be no asymmetric $4\times n$ matrix if $n\geq 2^3$.  Since $Q_1, Q_2, Q_3$ are not 2-distinguishable, there is no asymmetric $4\times n$ matrix for $n\in[1,3]$. By the Complement Theorem this implies that there is no asymmetric $4\times n$ matrix for $n\in[5,7]$. Further, one can see by exhaustion that there is no $4\times 4$ asymmetric matrix.  matrix for $n\in [4,6]$.  By the Complement Theorem this implies that there is no asymmetric $4\times n$ matrix for $n\in[4,8]$.  Thus for all $n$, $\mu_n\geq 5$. For $n\in [4,12]$, we will argue the existence of an asymmetric $5{\times}n$ matrix, which will prove that $\mu_n\leq 5$.

\1Let $X$ be the binary matrix in Figure~\ref{fig:5by7}. Suppose that $(\sigma,\varphi)$ is a symmetry of $X$.  Since $X$ is a strictly low weight matrix, by Lemma~\ref{lem:Rules}\ref{lem:Rules:2} and~\ref{lem:Rules}\ref{lem:Rules:3}, $\varphi$ and $\sigma$ can only permute columns and rows of the same weight.  Since the $1\st$ column is the only one of weight 1, it is fixed by $\varphi$.  This forces the $1\st$ row to have a 1 in its $1\st$ position.   Since the $1\st$ row is the only row with a 1 in its $1\st$ position, the $1\st$ row is fixed by $\sigma$.   Since the $3\rd$ row is the only one of weight 4 and the $5\th$ row is the only one of weight 2, these rows are fixed by $\sigma$. Since the $2\nd$ column is the only one with 1,0,0 in its $1\st, 3\rd, 5\th$ positions respectively (coming from fixed rows 1, 3, and 5), the $2\nd$ column is fixed by $\varphi$. Since the $2\nd$ row is the only not-yet-known-to-be-fixed row with a 1 in position 2, (coming from the fixed $2\nd$ column), the $2\nd$ row is fixed by $\sigma$. Now that we know that all of rows $1,2,3,5$ are fixed, the $4\th$ row must be fixed by $\sigma$ as well.  Thus $\sigma$ is trivial.  Since $X$ has no identical rows and no isomorphic columns, by Lemma~\ref{lem:iff}, $\varphi$ is trivial as well.  Thus $X$ is asymmetric.

\begin{figure}[htb] \begin{center}$\left[\begin{array}{cccccccc} 
1 & 1 & 0 & 0 & 0 & 1 & 0 & 0\\
0 & 1 & 1 & 0 & 0 & 0 & 0 & 1 \\
0 & 0 & 1 & 1 & 0 & 1 & 1 & 0 \\
0 & 0 & 0 & 1 & 1 & 0 & 0 & 1 \\
0 & 0 & 0 & 0 & 1 & 0 & 1 & 0
\end{array} \right]$\end{center} 
\caption{$5{\times} 7$ asymmetric matrix, $X$}
\label{fig:5by7}
\end{figure}

\1Arguments similar to the one above show that for $n\in[4,7]$ the submatrix consisting of the first $n$ columns of $X$ is asymmetric. Using these $5{\times}n$ asymmetric matrices, the Complement Theorem guarantees $5{\times} n$ asymmetric matrices for $n\in [2^4-7, 2^4-4]$.  Together, these prove the existence of asymmetric $5{\times}n$ matrices with $n\in [4,2^4-4]$ as desired.\end{proof}

\noindent{\bf Lemma~\ref{lem:smallm}:} \rm For $m\in[5,11]$, $\nu_m = 4$.

\begin{proof} As argued in Example~\ref{ex:5by4}, for all $m$, $\nu_m\geq 4$. The matrices  in Figure~\ref{fig:smallm} prove the existence of asymmetric $m{\times}4$ matrices for $m\in [5,8]$. (The reader may check that these matrices are indeed asymmetric. Start by looking at which columns must be fixed by their unique column weight.  For those with no column of weight $\frac{m}{2}$, Lemma~\ref{lem:Rules}\ref{lem:Rules:3} can be used to do the same with row weights.  The rest takes only a little logic.) Using these matrices, the Complement Theorem guarantees the existence of $m\times 4$ asymmetric matrices for $m\in[8,11]$. 
Thus we have proved that for $m\in[5,11]$, $\nu_m\leq 4$ and we have the desired equality. 
\end{proof}

\begin{figure}[htb]

\begin{center}$\left[\begin{array}{cccc} 
1 & 1 & 0 & 0 \\
0 & 1 & 1 & 0 \\
0 & 0 & 1 & 1 \\
0 & 0 & 0 & 1 \\
0 & 0 & 0 & 0
\end{array} \right]$\hspace{1in}
$\left[\begin{array}{cccc} 
1 & 1 & 0 & 0 \\
0 & 1 & 1 & 0 \\
0 & 0 & 1 & 1 \\
0 & 0 & 0 & 1 \\
0 & 0 & 1 & 0 \\
0 & 0 & 0 & 0 
\end{array} \right]$\end{center}

\vskip.25in

\begin{center}
$\left[\begin{array}{cccc} 
1 & 1 & 0 & 0 \\
0 & 1 & 1 & 0 \\
0 & 0 & 1 & 1 \\
0 & 0 & 0 & 1 \\
0 & 0 & 1 & 0 \\
0 & 1 & 0 & 0 \\
0 & 0 & 0 & 0 

\end{array} \right]$\hspace{1in}
$\left[\begin{array}{cccc} 
1 & 1 & 0 & 0 \\
0 & 1 & 0 & 0 \\
1 & 0 & 0 & 0 \\
0 & 0 & 0 & 1 \\
1 & 0 & 1 & 0 \\
1 & 1 & 1 & 0 \\
0 & 1 & 1 & 1 \\
0 & 0 & 0 & 0 
\end{array} \right]$\end{center}
\caption{$m\times 4$ asymmetric matrices for $m\in [5,8]$.}
\label{fig:smallm}
\end{figure}

\1{\bf Theorem~\ref{thm:smallm}:}  For $m\in [5, 11]$, and $n\in [4,2^{m-1}-4]$ there exists an asymmetric $m{\times}n$ matrix. 

\begin{proof} For each $m\in[5,11]$ we construct an asymmetric $m{\times} n$ matrix for $n\in [4,2^{m-1}-4]$.   The proof is broken into three cases based on the the size of $m$.  These cases are $m=5$, $m=6$, and $m\in[7,11]$.

\1$\bullet${\boldmath $m=5$:}  The truth of this case is proved in Lemma~\ref{lem:smalln}.

\1$\bullet${\boldmath $m=6$:}   By Lemma~\ref{lem:smallm}, we know that there exists an asymmetric $6{\times} 4$ matrix.  Further by Lemma~\ref{lem:mminusone},  there exist asymmetric $6{\times} 5$ and $6{\times}6$ matrices with columns of weight 1 and 2. Note that there are $\frac{1}{2}{6\choose 3} = 10$ binary strings of length 6 and weight 3. For $j\in[1,10]$ create a $6{\times}j$ matrix with nonisomorphic columns of weight 3. Concatenate the result with the asymmetric $6\times 6$ matrix with columns of weight 1 and 2 identified above. By Lemma~\ref{lem:extracolumns}, the resulting $6{\times}(4+j)$ matrix is asymmetric.   Thus there are asymmetric $6{\times} n$ matrices for $n\in [4,2^4]$.  Using the Complement Theorem, this guarantees the existence of asymmetric $6{\times} n$ matrices for $n\in[2^5-16,2^5-4]$.  Overall, we now have proof of existence of asymmetric $6\times n$ matrices for $n\in[4,2^5-4]$ as desired.

\1$\bullet${\boldmath $m\in[7,11]$:}  For $m\in [7,11]$, we know by Lemma~\ref{lem:smallm} that there exists an asymmetric $m{\times} 4$ matrix, $X$ . Next we will show that for $n\in [5,m-1]$ there are asymmetric $m{\times}n$ matrices.

\1We can achieve an asymmetric $7{\times} 5$ matrix by concatenating a $7\times1$ matrix of zeros to the $7{\times} 4$ matrix in Figure~\ref{fig:smallm} without disturbing the asymmetry.  Further by Lemma~\ref{lem:mminusone}, we can achieve an asymmetric $7{\times}6$ matrix.

\1The asymmetric $8\times 4$ matrix $X$ given in  Lemma~\ref{lem:smallm} has no column of weight 1.  Thus for $j\in [1,3]$, we can concatenate an $m{\times} j$ matrix with distinct columns of weight 1 to $X$. By Lemma~\ref{lem:smallm}, the resulting $8{\times} n$ matrices with $n\in[5,7]$ are asymmetric. 

\1For each of $m\in[9,11]$, we achieve an $m{\times} 4$ matrix $X$ by using the Complement Theorem on  the $(16-m){\times} 4$ matrix from the proof of Lemma \ref{lem:smallm} and then using the nontrivial automorphism of $\Z_2$ on each column.  It is easy to check that each such $X$, has  no column of weight 1.  Thus for $j\in [1,m-5]$, we can concatenate an $m{\times} j$ matrix consisting of distinct columns of weight 1 with the $m\times 4$ matrix.  By Lemma~\ref{lem:extracolumns} the resulting $9{\times} n$ matrices with $n\in[5,8]$ are asymmetric. 

\1Thus we now have asymmetric $m{\times} n$ matrices for $m\in[7,11]$ and $n\in[4,m-1]$.  Now to complete this to $n\in[4,2^{m-2}]$. 
 
\1For $m\geq 5$, by Lemma~\ref{lem:mminusone},  there exist asymmetric $m{\times}m$ matrix with columns of weight 1 and 2. Choose such a matrix for each $m$ and denote it by $X$.  One can easily check that for $m\in[7,11]$, ${m\choose 3}\leq 2^{m-2}-m$.    Since for each of these $m$,  $3<\frac{m}{2}$, weight 3 columns of length $m$ are necessarily nonisomorphic.  Thus for each $j\in[1,2^{m-2}-m]$ we can construct an $m{\times} j$ matrix $Y$ of distinct weight 3 columns. Concatenate $Y$  with $X$ to construct an $m{\times}(m+j)$ matrix.   By Lemma~\ref{lem:extracolumns}, the concatenated matrix is asymmetric.  

\1Thus we have proved the existence of asymmetric $m{\times} n$ matrices for $m\in [5,11]$ and $n\in [4, 2^{m-2}]$.  Using the Complement Theorem these asymmetric matrices guarantee the existence of asymmetric $m{\times} n$  matrices for $n\in [2^{m-2}, 2^{m-1}-4]$.\end{proof}

\1An entirely similar proof gives us the analogous result below.

\1{\bf Theorem~\ref{thm:smalln}:}  For $n\in [4, 12]$, and $m\in [5,2^n-5]$ there exists an asymmetric $m{\times}n$ matrix.

\bibliographystyle{plain}
\bibliography{HypercubeCost}

\begin{thebibliography}{10}

\bibitem{A}
Michael~O. Albertson.
\newblock Distinguishing {C}artesian powers of graphs.
\newblock {\em Electron. J. Combin.}, 12:Note 17 (electronic), 2005.

\bibitem{AlBo2007}
Michael~O. Albertson and Debra~L. Boutin.
\newblock Using determining sets to distinguish {K}neser graphs.
\newblock {\em Electron. J. Combin.}, 14(1):Research Paper 20 (electronic),
  2007.

\bibitem{AC}
Michael~O. Albertson and Karen~L. Collins.
\newblock Symmetry breaking in graphs.
\newblock {\em Electron. J. Combin.}, 3(1):Research Paper 18 (electronic),
  1996.

\bibitem{BC}
Bill Bogstad and Lenore~J. Cowen.
\newblock The distinguishing number of the hypercube.
\newblock {\em Discrete Math.}, 283(1-3):29--35, 2004.

\bibitem{BCKLPR2020a}
Debra Boutin, Sally Cockburn, Lauren Keough, Sarah Loeb, K.~E. Perry, and Puck
  Romback.
\newblock Distinguishing generalized {M}ycielskian graphs.
\newblock arXiv:2006.03739, 2020.

\bibitem{BoIm2017}
Debra Boutin and Wilfried Imrich.
\newblock The cost of distinguishing graphs.
\newblock In {\em Groups, graphs and random walks}, volume 436 of {\em London
  Math. Soc. Lecture Note Ser.}, pages 104--119. Cambridge Univ. Press,
  Cambridge, 2017.

\bibitem{Bou2008}
Debra~L. Boutin.
\newblock Small label classes in 2-distinguishing labelings.
\newblock {\em Ars Math. Contemp.}, 1(2):154--164, 2008.

\bibitem{Bou2009}
Debra~L. Boutin.
\newblock The determining number of a {C}artesian product.
\newblock {\em J. Graph Theory}, 61(2):77--87, 2009.

\bibitem{Bou2013a}
Debra~L. Boutin.
\newblock The cost of 2-distinguishing {C}artesian powers.
\newblock {\em Electron. J. Combin.}, 20(1):Paper 74, 13, 2013.

\bibitem{Bou2013b}
Debra~L. Boutin.
\newblock The cost of 2-distinguishing selected {K}neser graphs and hypercubes.
\newblock {\em J. Combin. Math. Combin. Comput.}, 85:161--171, 2013.

\bibitem{FNT}
Tomotaka Fukuda, Seiya Negami, and Thomas~W. Tucker.
\newblock 3-connected planar graphs are 2-distinguishable with few exceptions.
\newblock {\em Yokohama Math. J.}, 54(2):143--153, 2008.

\bibitem{HIK}
Richard Hammack, Wilfried Imrich, and Sandi Klav{\v{z}}ar.
\newblock {\em Handbook of product graphs}.
\newblock Discrete Mathematics and its Applications (Boca Raton). CRC Press,
  Boca Raton, FL, second edition, 2011.

\bibitem{IW}
Wilfried Imrich.
\newblock Personal communication.
\newblock 2007.

\bibitem{IK2}
Wilfried Imrich and Sandi Klav{\v{z}}ar.
\newblock Distinguishing {C}artesian powers of graphs.
\newblock {\em J. Graph Theory}, 53(3):250--260, 2006.

\bibitem{IKT}
Wilfried Imrich, Sandi Klav{\v{z}}ar, and Vladimir Trofimov.
\newblock Distinguishing infinite graphs.
\newblock {\em Electron. J. Combin.}, 14(1):Research Paper 36, 12 pp.
  (electronic), 2007.

\bibitem{KZ}
Sandi Klav{\v{z}}ar and Xuding Zhu.
\newblock Cartesian powers of graphs can be distinguished by two labels.
\newblock {\em European J. Combin.}, 28(1):303--310, 2007.

\bibitem{STW}
Simon~M. Smith, Mark~E. Watkins, and Thomas~W. Tucker.
\newblock Distinguishability of infinite groups and graphs.
\newblock preprint, 2011.

\bibitem{RS}
Richard Stong.
\newblock Personal communication.
\newblock 2016.

\bibitem{WZ}
Mark~E. Watkins and Xiangqian Zhou.
\newblock Distinguishability of locally finite trees.
\newblock {\em Electron. J. Combin.}, 14(1):Research Paper 29, 10 pp.
  (electronic), 2007.

\end{thebibliography}

\end{document}